\title{A classification of restrictive polynomial correspondences}
\author{Bharath Krishna Seshadri\footnote{Indian Institute of Science Education and Research Thiruvananthapuram (IISER-TVM). \\ 
ORCID: 0009 - 0004 - 0541 - 8040\ \ email: \texttt{bharathmaths21@iisertvm.ac.in}}\ \ and\ \ Shrihari Sridharan\footnote{Indian Institute of Science Education and Research Thiruvananthapuram (IISER-TVM). \\ ORCID: 0000-0003-2434-4767\ \ email: \texttt{shrihari@iisertvm.ac.in}\ (Corresponding Author) \\ The second author thanks the support provided by NBHM through a research grant \\ No. 02011/35/2025/NBHM(R.P)/R\&D II/9832}}
\DeclareFontFamily{OT1}{pzc}{}
\DeclareFontShape{OT1}{pzc}{m}{it}%
              {<-> s * [0.900] pzcmi7t}{}
\DeclareMathAlphabet{\mathpzc}{OT1}{pzc}%
                                 {m}{it}
\newtheorem{theorem}{Theorem}[section]}
\newtheorem{definition}[theorem]{Definition}}
\newtheorem{lemma}[theorem]{Lemma}}
\newtheorem{proposition}[theorem]{Proposition}}
\newtheorem{remark}[theorem]{Remark}}
\newtheorem{example}[theorem]{Example}}
\numberwithin{equation}{section}
\newenvironment{proof}{\paragraph{Proof:}}{\hfill$\bullet$}
\begin{document}

\maketitle

\begin{abstract} 
\noindent 
In this manuscript, we study a special class of correspondences on $\mathbb{P}^{1} \times \mathbb{P}^{1}$ given by a polynomial relation, say $P(z, w)$. We focus on what we call restrictive polynomial correspondence and characterise that it can be written as $P (z, w) = g_{1}(w) h_{1}(z) + \cdots + g_{\rho}(w) h_{\rho}(z)$, for some appropriate $\rho \in \mathbb{Z}_{+}$, where $g_{r}$ and $h_{r}$ are polynomials. In particular, when $\rho = 2$, we say $P$ is irreducible and observe that the equation $P(z, w) = 0$ can be rewritten as $R(z) = S(w)$, where $R$ and $S$ are rational maps of appropriate degree. Further, we also define an operation that, with the exception of degenerate cases, constructs a new irreducible restrictive polynomial correspondence from any two given irreducible restrictive polynomial correspondences.
\end{abstract}

\begin{tabular}{l l} 
{\bf Keywords} & Restrictive polynomial correspondence \\ 
& Full rank factorisation of a matrix \\ 
& Symmetries in a correspondence \\ 
& Reducible and irreducible correspondences \\ 
& \\ 
& \\ 
{\bf MSC Subject} & 12D10, 30C15, 32A08 \\ 
{\bf Classifications} & 
\end{tabular} 
\bigskip 

\section{Introduction}

The study of dynamics of correspondences owes its origins to the papers of Bullett and his collaborators, especially \cite{sb:1988,  bf:2005, bls:2019, bp:1994}. Bullett began his investigations by considering what he called quadratic correspondences. A quadratic correspondence is an implicit relation between two variables in such a way that fixing either of them in the correspondence and finding the zero set of the resulting expression yields a set with cardinality $2$ (counting multiplicity). A simple example of the same is to consider a bivariate polynomial, say $P \in \mathbb{C}[z, w]$ with degree $2$ in both variables. 

Starting with an arbitrary point $z_{(1)} \in \mathbb{C}$, suppose that $w_{(1)}$ and $w_{(2)}$ are the solutions of the equation $P(z_{(1)}, w) = 0$. Then, the point $z_{(1)}$ is necessarily one of the solutions of equations $P(z, w_{(1)}) = 0$ and $P(z, w_{(2)}) = 0$. Suppose that the other of the solutions is $z_{(2)}$ and $z_{(3)}$, respectively. Bullett, in \cite{sb:1988}, defined the polynomial correspondence $P$ to be a map of pairs if for every $z_{(1)} \in \mathbb{C}$, we have $z_{(2)} = z_{(3)}$. He further obtained characterizing conditions for a quadratic polynomial correspondence to be a map of pairs. Though the paper contains far more interesting results concerning the dynamics of such quadratic correspondences, we are currently focused only on the above-mentioned ideas. 

The authors in \cite{sst:2023} generalize the case of quadratic correspondences to include polynomial correspondences of any arbitrary degree $d \ge 2$, although the degree being equal in both the variables to define and characterize, what they call maps of $d$-tuples. A specific hurdle that one encounters while increasing the degree beyond $2$ is the absence of involutions, that Bullett makes extensive use of in his proofs of the characterizing results. The authors, in \cite{sst:2023}, overcome this difficulty by making use of some elementary results from linear algebra. 

Following the work in \cite{sst:2023}, we consider a furthermore generalisation of polynomial correspondences where the degrees in each of the variables need not be equal and complete the analysis of the same. Whenever a polynomial correspondence exhibits a behaviour analogous to the map of pairs under the corresponding situation here, we shall call it a restrictive polynomial correspondence, a precise definition of the same shall appear in Section \ref{sec:rpc}. Moreover, we classify such restrictive polynomial correspondences as reducible and irreducible ones, in Sections \ref{sec:rrpc} and \ref{sec:irpc}, thus exhausting the class of restrictive polynomial correspondences. In Section \ref{sec:*prod}, we define a new multiplication operation between two polynomial correspondences and obtain conditions for two irreducible restrictive polynomial correspondences to result in an irreducible restrictive polynomial correspondence. The paper is strewn with examples illustrating every concept that we introduce, define and characterise. 

We conclude this section by stating that this study of restrictive polynomial correspondences is a small drop in the ocean of holomorphic correspondences, a topic of sufficient interest for many mathematicians in the recent decades, for example \cite{bs:2016, bp3:2001, din:2005, ds:2006}. Towards that end, we start with the definition of a holomorphic correspondence. 

\begin{definition} 
\label{correspondence} 
Consider an implicit relation given by $f(z, w)$, analytic in the variables $z$ and $w$. Let $\mathscr{G}_{f}$ denote the graph of $f$ in $\mathbb{P}^{1} \times \mathbb{P}^{1}$ {\it i.e.}, the biprojective completion of the zero set of $f$ in $\mathbb{P}^{1} \times \mathbb{P}^{1}$. Suppose 
\begin{enumerate} 
\item $\Pi_{w} \left( \Pi_{z}^{-1} \left( \{z_{0}\} \right) \cap \mathscr{G}_{f} \right)$ is generically $d_{z}$-valued for any $z_{0} \in \mathbb{P}^{1}$; 
\item $\Pi_{z} \left( \Pi_{w}^{-1} \left( \{w_{0}\} \right) \cap \mathscr{G}_{f} \right)$ is generically $d_{w}$-valued for any $w_{0} \in \mathbb{P}^{1}$; 
\end{enumerate} 
where $\Pi_{z}$ and $\Pi_{w}$ denote the projection maps defined on $\mathscr{G}_{f}$ along the coordinates respectively, then such an implicit relation $f$ is called \emph{a holomorphic correspondence of bidegree $(d_{w}, d_{z})$}.  
\end{definition} 

Observe that by virtue of Definition \ref{correspondence}, every complex-valued rational map of degree $d$ defined on $\mathbb{P}^{1}$ is now, accorded the status of being a holomorphic correspondence of bidegree $(d, 1)$. We now state a theorem due to Bullett and Penrose, as can be found in \cite{bp3:2001} that characterises holomorphic correspondences and provides us a motivation to consider polynomial correspondences, as we have done in this paper. 

\begin{theorem}[\cite{bp3:2001}, Theorem 2, Page 121] 
Every holomorphic correspondence on the Riemann sphere is algebraic, that is to say, [it] can be expressed in the form $P(z, w) = 0$ for some polynomial $P$ in two variables. 
\end{theorem}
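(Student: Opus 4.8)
The plan is to exploit the finite-valence hypotheses to manufacture, fibre by fibre, a monic polynomial in $w$ whose coefficients are rational in $z$, and then to clear denominators. I would begin by discarding the finitely many ``bad'' values of $z \in \mathbb{P}^{1}$ over which the fibre $\Pi_{w}\big(\Pi_{z}^{-1}(\{z\}) \cap \mathscr{G}_{f}\big)$ fails to consist of exactly $d_{z}$ distinct points (the branch points of the projection $\Pi_{z}$ and the points of indeterminacy). Over the complement $U$ of this finite set, the graph splits locally into $d_{z}$ holomorphic branches $w_{1}(z), \dots, w_{d_{z}}(z)$, and I would form the elementary symmetric functions $\sigma_{k}(z) = \sigma_{k}\big(w_{1}(z), \dots, w_{d_{z}}(z)\big)$ for $1 \le k \le d_{z}$. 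Although the individual branches are multivalued (monodromy around a branch point permutes them), each $\sigma_{k}$ is invariant under such permutations and hence is a genuine single-valued holomorphic function on $U$.

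The key step is then to show that each $\sigma_{k}$ extends to a rational function of $z$ on all of $\mathbb{P}^{1}$. Since a holomorphic correspondence is, by hypothesis, a proper finite-to-one object over each factor, the fibre values $w_{j}(z)$ remain bounded as $z$ approaches any excluded point of $\mathbb{P}^{1} \setminus U$; by the Riemann removable-singularity theorem the symmetric functions extend holomorphically, or at worst meromorphically, across these finitely many points. A meromorphic function on the compact Riemann surface $\mathbb{P}^{1}$ is necessarily rational, so each $\sigma_{k}(z) \in \mathbb{C}(z)$. Consequently the monic polynomial
\begin{equation*}
Q(z, w) \ = \ \prod_{j=1}^{d_{z}} \big( w - w_{j}(z) \big) \ = \ w^{d_{z}} - \sigma_{1}(z)\, w^{d_{z} - 1} + \cdots + (-1)^{d_{z}} \sigma_{d_{z}}(z)
\end{equation*}
has coefficients in $\mathbb{C}(z)$. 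Multiplying by a common denominator $D(z)$ that clears all the poles in $z$, I would set $P(z, w) = D(z)\, Q(z, w)$, which is a genuine polynomial in both variables whose zero set is precisely $\mathscr{G}_{f}$.

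Finally I would verify the bidegree: the product over the $d_{z}$ sheets shows that the degree of $P$ in $w$ is $d_{z}$, while running the identical construction with the roles of $z$ and $w$ interchanged — symmetric functions of the $d_{w}$ fibre values of $\Pi_{w}$ — shows that the degree of $P$ in $z$ is $d_{w}$, matching the declared bidegree $(d_{w}, d_{z})$. At a higher level, the whole statement is an instance of Chow's theorem: $\mathscr{G}_{f}$ is a closed one-dimensional analytic subvariety of $\mathbb{P}^{1} \times \mathbb{P}^{1}$, which under the Segre embedding into $\mathbb{P}^{3}$ becomes a projective analytic variety, hence algebraic, and a divisor on $\mathbb{P}^{1} \times \mathbb{P}^{1}$ is the zero locus of a single bihomogeneous polynomial; dehomogenising recovers $P$.

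I expect the main obstacle to be the extension step across the exceptional set. One must argue carefully that the correspondence being \emph{holomorphic} and \emph{finite-valued} forbids the branches from escaping to $\infty$ or accumulating badly, so that the symmetric functions have only poles and not essential singularities; this is exactly where the properness built into Definition \ref{correspondence} is indispensable, and where a naive fibre-wise construction would otherwise break down.
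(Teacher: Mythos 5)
First, a point of order: the paper does not prove this statement at all --- it is quoted as a theorem of Bullett and Penrose \cite{bp3:2001}, so there is no in-paper proof to compare against. Measured against the standard argument (which is, in essence, what the cited source does: package the $d_{z}$ fibre values into a holomorphic map from $\mathbb{P}^{1}$ to the symmetric product ${\rm Sym}^{d_{z}}(\mathbb{P}^{1}) \cong \mathbb{P}^{d_{z}}$ and use the fact that holomorphic maps between projective spaces are algebraic), your outline --- elementary symmetric functions of the local branches, single-valuedness by monodromy invariance, extension to rational functions, clearing denominators, with Chow's theorem as a high-level alternative --- is exactly the right skeleton.

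The genuine gap is in the one step you yourself flag as the crux, and the justification you give for it is false, not merely incomplete. The branches do \emph{not} remain bounded near an excluded point, and properness does not ``forbid the branches from escaping to $\infty$'': the target is $\mathbb{P}^{1}$, where $\infty$ is an ordinary point. The bidegree-$(1,1)$ correspondence $zw - 1 = 0$ already defeats the claim --- its single branch $w(z) = 1/z$ tends to $\infty$ as $z \to 0$, so $\sigma_{1}(z) = 1/z$ has a pole at $0$, and Riemann's removable-singularity theorem is inapplicable; your hedge ``or at worst meromorphically'' is precisely the assertion that needs proof, since ruling out an essential singularity is the entire content of the step. The repair is standard but must be said: because $\mathscr{G}_{f}$ is closed in the compact space $\mathbb{P}^{1} \times \mathbb{P}^{1}$ and has finite fibre over the excluded point $z_{0}$ (no vertical line components, by the generic $d_{z}$-valuedness), the cluster set of the branches as $z \to z_{0}$ is a finite subset of $\mathbb{P}^{1}$; choose a M\"{o}bius map $M$ carrying this finite set into $\mathbb{C}$, apply your boundedness-plus-removability argument to the transformed branches $M(w_{j}(z))$, and then recover each $\sigma_{k}$ of the original branches as a quotient of symmetric polynomial expressions in the $M(w_{j})$, hence meromorphic at $z_{0}$. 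Alternatively, abandon the fibrewise argument entirely and run the Chow's theorem route you sketch in passing: $\mathscr{G}_{f}$ is a closed one-dimensional analytic subset of $\mathbb{P}^{1} \times \mathbb{P}^{1}$, hence algebraic, and every curve in $\mathbb{P}^{1} \times \mathbb{P}^{1}$ is the zero locus of a single bihomogeneous polynomial; that argument is complete as stated and needs no boundedness claim.
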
 

\section{Restrictive polynomial correspondences} 
\label{sec:rpc} 

Consider a polynomial correspondence $P$ of bidegree $(d_{w}, d_{z})$ in given by 
\begin{equation} 
\label{eqn:pc} 
P_{(d_{w}, d_{z})} \left( z, w \right)\ =\ \sum_{j\, =\, 0}^{d_{w}} \sum_{k\, =\, 0}^{d_{z}} \Lambda_{\left( j, k \right)} z^{j} w^{k}. 
\end{equation} 
Bearing in mind the conditions mentioned in Definition \ref{correspondence}, we shall be interested in those polynomial correspondences $P_{(d_{w}, d_{z})}$ given by Equation \eqref{eqn:pc} that satisfy: 
\begin{enumerate} 
\item[$(P1)$] No linear polynomial in $z$ divides $P_{(d_{w}, d_{z})} (z, w)$; 
\item[$(P2)$] No linear polynomial in $w$ divides $P_{(d_{w}, d_{z})} (z, w)$. 
\end{enumerate} 

In this paper, we consider polynomial correspondences as described above, where $d_{w}$ is not necessarily equal to $d_{z}$, however with the condition that both $d_{w}$ and $d_{z}$ being strictly positive integers with at least one of them being strictly bigger than $1$. To every polynomial correspondence $P_{(d_{w}, d_{z})}$, we associate the matrix $M_{P_{(d_{w}, d_{z})}}$ of order $(d_{w} + 1) \times (d_{z} + 1)$ given by 
\begin{equation} 
\label{eqn:matrix} 
P_{(d_{w}, d_{z})}\ \sim\ M_{P_{(d_{w}, d_{z})}}\ =\ \begin{pmatrix} \Lambda_{\left( d_{w}, d_{z} \right)} & \Lambda_{\left( d_{w}, d_{z} - 1 \right)} & \cdots & \Lambda_{\left( d_{w}, 0 \right)} \\ 
\Lambda_{\left( d_{w} - 1, d_{z} \right)} & \Lambda_{\left( d_{w} - 1, d_{z} - 1 \right)} & \cdots & \Lambda_{\left( d_{w} - 1, 0 \right)} \\ 
\vdots & \vdots & \ddots & \vdots \\ 
\Lambda_{\left( 0, d_{z} \right)} & \Lambda_{\left( 0, d_{z} - 1 \right)} & \cdots & \Lambda_{\left( 0, 0 \right)} 
\end{pmatrix}. 
\end{equation} 

For such polynomial correspondences, we now motivate a situation when it could be restrictive and write the formal definition of a restrictive polynomial correspondence later. For any $z_{0} \in \mathbb{C}$, consider the list of solutions of the polynomial equation $P_{(d_{w}, d_{z})} \left( z_{0}, w \right) = 0$ given by, say $L_{w} (z_{0}) = \left[ w^{(1)}(z_{0}), \cdots, w^{(d_{z})}(z_{0}) \right]$. Substituting $w = w^{(k)}(z_{0})$ for $1 \le k \le d_{z}$ and solving for the polynomial equation $P_{(d_{w}, d_{z})} \left( z, w^{(k)}(z_{0}) \right) = 0$, we obtain a solution list given by 
\[ L_{z} \left(w^{(k)}(z_{0})\right)\ =\ \left[ z^{(1)} \left(w^{(k)}(z_{0})\right), \cdots, z^{(d_{w})} \left(w^{(k)}(z_{0})\right) \right]. \] 
Note that for all $1 \le k \le d_{z}$, at least one of the solutions $z^{(j)} \left(w^{(k)}(z_{0})\right) \equiv z_{0}$. The list of solutions in the $w$-plane naturally depends on the starting point $z_{0}$ and the list of solutions in the $z$-plane depends on our choice of $w^{(k)}(z_{0})$. Suppose the latter list is independent of our choice of $w^{(k)}(z_{0})$, {\it i.e.}, the list of points in $L_{z} \left(w^{(k)}(z_{0})\right)$ remains the same for any $1 \le k \le d_{z}$. This leads us to a situation when a list in the $z$-plane of cardinality $d_{w}$ and a list in the $w$-plane of cardinality $d_{z}$ are associated to each other {\it via} the correspondence. In general, this need not be so. In fact, the list of points associated to each other {\it via} the correspondence $\left[ z^{(j)} \left(w^{(k)}(z_{0})\right) : 1 \le j \le d_{w}; 1 \le k \le d_{z} \right]$ has cardinality $d_{w} d_{z}$. Our ability to reduce this cardinality naturally prompts a thorough study of the same. Towards that direction, is the following definition. 

\begin{definition} 
\label{defn:rescorr} 
Consider a polynomial correspondence $P_{(d_{w}, d_{z})} \left( z, w \right)$ of bidegree $(d_{w}, d_{z})$, as given in Equation \eqref{eqn:pc}. It is said to be a \emph{restrictive polynomial correspondence} if the list of points 
\[ P_{(d_{w}, d_{z})} : L_{z} (w^{(1)}(z_{0}))\ \equiv\ \cdots\ \equiv\ L_{z} (w^{(d_{z})}(z_{0}))\ \ \text{for any}\ z_{0} \in \mathbb{C}. \]
\end{definition} 

As a consequence of Definition \ref{defn:rescorr}, we immediately have the following lemma. 

\begin{lemma} 
\label{rem:abovedefn}
Let $P_{(d_{w}, d_{z})} \left( z, w \right)$ be a restrictive polynomial correspondence of bidegree $(d_{w}, d_{z})$. Then, for any $w_{0} \in \mathbb{C}$ as the starting point, we have 
\[ P_{(d_{w}, d_{z})} : L_{w} (z^{(1)}(w_{0}))\ \equiv\ \cdots\ \equiv\ L_{w} (z^{(d_{w})}(w_{0})). \] 
In other words, our definition of the correspondence as restrictive starting from one of the variables forces a similar restriction on the other variable, too. 
\end{lemma} 

\begin{proof} 
For some fixed $w_{0} \in \mathbb{C}$, let $L_{z} (w_{0}) = \left[ z^{(1)}(w_{0}), \cdots, z^{(d_{w})}(w_{0}) \right]$. Substituting $z = z^{(1)}(w_{0})$ and solving for the polynomial equation $P_{(d_{w}, d_{z})} \left( z^{(1)}(w_{0}), w \right) = 0$, we obtain a solution list given by 
\[ L_{w} \left(z^{(1)}(w_{0})\right)\ =\ \left[ w^{(1)} \left(z^{(1)}(w_{0})\right), \cdots, w^{(d_{z})} \left(z^{(1)}(w_{0})\right) \right], \] 
where $w_{0} \in L_{w} \left(z^{(1)}(w_{0})\right)$ and hence, without loss of generality, we assume $w_{0} = w^{(1)} \left(z^{(1)}(w_{0})\right)$. Since $P_{(d_{w}, d_{z})}$ is restrictive, we have 
\begin{equation}
\label{eqn:rest}
L_{z} (w_{0}) \equiv L_{z} \left(w^{(2)} \left(z^{(1)}(w_{0})\right)\right) \equiv \cdots \equiv L_{z} \left(w^{(d_{z})} \left(z^{(1)}(w_{0})\right)\right).
\end{equation}
Now, fix $1 \leq j \leq d_w$ and consider $P_{(d_{w}, d_{z})} \left( z^{(j)}(w_{0}), w \right) = 0$. By Equation \ref{eqn:rest}, it is clear that, $w^{(k)} \left(z^{(1)}(w_{0})\right)$ is a root of $P_{(d_{w}, d_{z})} \left( z^{(j)}(w_{0}), w \right)$, for all $1 \le k \le d_z$. Further, since there are exactly $d_z$ many roots for $P_{(d_{w}, d_{z})} \left( z^{(j)}(w_{0}), w \right)$, these are the only roots. Thus, we have $L_{w} \left(z^{(1)} (w_{0})\right) \equiv \cdots \equiv L_{w} \left(z^{(d_w)} (w_{0})\right).$
\end{proof} 

\begin{remark} 
\label{rem:lists} 
As a consequence of Definition \eqref{defn:rescorr} and Lemma \eqref{rem:abovedefn}, we can find lists of $d_{w}$ points in the $z$-plane, say $\left[ z_{1}, \cdots, z_{d_{w}} \right]$ and lists of $d_{z}$ points in the $w$-plane, say $\left[ w_{1}, \cdots, w_{d_{z}} \right]$ that are related to each other {\it via} the correspondence $P_{(d_{w}, d_{z})}$ and satisfy $\left(z_{j}, w_{k}\right) \in \mathscr{G}_{P_{(d_{w}, d_{z})}}$. In other words, any starting point $z_{j}$ results in the list of $w_{k}$'s and vice-versa. We denote this by $L_{z} \left( w_{k} \right)\ =\ \left[ z_{1}, \cdots, z_{d_{w}} \right]\ \ \stackrel{P_{(d_{w}, d_{z})}}{\longleftrightarrow}\ \ \left[ w_{1}, \cdots, w_{d_{z}} \right]\ =\ L_{w} \left( z_{j} \right)$. Further, $\left(z, w_{k}\right) \notin \mathscr{G}_{P_{(d_{w}, d_{z})}}$ for $1 \le k \le d_{z}$, unless $z = z_{j}$ for some $1 \le j \le d_{w}$ and $\left(z_{j}, w\right) \notin \mathscr{G}_{P_{(d_{w}, d_{z})}}$ for $1 \le j \le d_{w}$ unless $w = w_{k}$ for some $1 \le k \le d_{z}$. 
\end{remark} 

We now explain the above definition with two examples, that illustrate the concept of restrictive polynomial correspondences. 

\begin{example} 
\label{ex:one} 
Consider the polynomial correspondence of bidegree $(3, 2)$ given by 
\[ P_{(3, 2)} (z, w) = i z^{3} w^{2} + 5 z^{3} w - z^{2} w^{2} + z^{3} - 2 z^{2} w + 11i z w^{2} - 6 z^{2} + 55 z w - w^{2} + 11 z - 2 w - 6. \] 
Observe that $P_{(3, 2)}$ satisfies the required properties, as mentioned in the beginning of this section. Substituting $w \in \left\{ 0, \dfrac{28}{1 - 6i} \right\}$, we obtain $z \in \left\{ 1, 2, 3 \right\}$ and vice-versa. In fact, such an association between $3$ points in the $z$-plane and $2$ points in the $w$-plane {\it via} the polynomial correspondence $P_{(3, 2)}$ can be obtained for any starting point $z_{0}$ or $w_{0}$ in $\mathbb{C}$, thus making $P_{(3, 2)}$ a restrictive polynomial correspondence. 
\end{example} 

\begin{example} 
\label{ex:onesq} 
The polynomial correspondence $P_{(6, 4)} = \left(P_{(3, 2)}\right)^{2}$ of bidegree $(6, 4)$ where $P_{(3, 2)}$ is as stated in Example \ref{ex:one} is also a restrictive polynomial correspondence. However, the association between the points is preserved, in this case, with an increase in the multiplicities of each of the points. 
\end{example} 

\section{Reducible restrictive polynomial correspondences} 
\label{sec:rrpc} 

We begin this section by carefully delineating a distinction between what we describe as the concept of reducibility and irreducibility among restrictive polynomial correspondences, as one might have observed from Examples \ref{ex:one} and \ref{ex:onesq}. We call a restrictive polynomial correspondence $P_{(d_{w}, d_{z})} \left( z, w \right)$, as written in Equation \eqref{eqn:pc}, \emph{irreducible} if it can not be expressed as a product of finitely many (more than one) restrictive polynomial correspondences and \emph{reducible}, otherwise. Thus, the example stated in \ref{ex:one} is an irreducible restrictive polynomial correspondence, while that in \ref{ex:onesq} is a reducible restrictive polynomial correspondence. 

Suppose the polynomial correspondence $P_{(d_{w}, d_{z})}$ can be written as a product of two irreducible restrictive polynomial correspondences, say $P^{(1)}_{(m_{w}, m_{z})}$ and $P^{(2)}_{(n_{w}, n_{z})}$ so that $d_{w} = m_{w} + n_{w}$ and $d_{z} = m_{z} + n_{z}$. We then obtain a necessary condition in order that $P_{(d_{w}, d_{z})}$ is a restrictive polynomial correspondence. For $P_{(d_{w}, d_{z})}$ to be a restrictive polynomial correspondence, we need 
\[ L_{z} \left( w_{k}^{(i)} \right) = \left[ z_{1}^{(1)}, \cdots, z_{m_{w}}^{(1)}, z_{1}^{(2)}, \cdots, z_{n_{w}}^{(2)} \right] \stackrel{P_{(d_{w}, d_{z})}}{\longleftrightarrow} \left[ w_{1}^{(1)}, \cdots, w_{m_{z}}^{(1)}, w_{1}^{(2)}, \cdots, w_{n_{z}}^{(2)} \right] = L_{w} \left( z_{j}^{(i)} \right). \] 

Since $P^{(1)}_{(m_{w}, m_{z})}$ and $P^{(2)}_{(n_{w}, n_{z})}$ are irreducible restrictive polynomial correspondences, by Definition \ref{defn:rescorr}, we know that 
\begin{displaymath} 
\begin{array}{r c l c c c l l l l} 
\vspace{+10pt} 
P^{(1)}_{(m_{w}, m_{z})} & : & L_{z} (w^{(1)}(z_{0})) & \equiv & \cdots & \equiv & L_{z} (w^{(m_{z})}(z_{0})) & \text{for any} & z_{0} \in \mathbb{C}, & \text{and} \\  
P^{(2)}_{(n_{w}, n_{z})} & : & L_{z} (w^{(1)}(z_{0})) & \equiv & \cdots & \equiv & L_{z} (w^{(n_{z})}(z_{0})) & \text{for any} & z_{0} \in \mathbb{C}. & 
\end{array} 
\end{displaymath} 

Owing to Remark \ref{rem:lists}, this forces the lists $\left[ z_{1}^{(1)}, \cdots, z_{m_{w}}^{(1)} \right]$ and $\left[ z_{1}^{(2)}, \cdots, z_{n_{w}}^{(2)} \right]$ to be one and the same and further, the lists $\left[ w_{1}^{(1)}, \cdots, w_{m_{z}}^{(1)} \right]$ and $\left[ w_{1}^{(2)}, \cdots, w_{n_{z}}^{(2)} \right]$ also to be one and the same. This results in the following theorem. 

\begin{theorem} 
\label{thm:factor} 
Any restrictive polynomial correspondence $P_{(d_{w}, d_{z})}$, as stated in Equation \eqref{eqn:pc} can be factorised as $\left[ Q_{(d_{w}', d_{z}')} \right]^{d}$, where $Q_{(d_{w}', d_{z}')}$ is an irreducible restrictive polynomial correspondence and $d$ is a common divisor of $d_{w}$ and $d_{z}$ satisfying $d d_{w}' = d_{w}$ and $d d_{z}' = d_{z}$. 
\end{theorem}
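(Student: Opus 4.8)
The plan is to prove the sharper assertion that whenever a restrictive correspondence is written as a product of irreducible restrictive correspondences, all the factors must coincide up to a multiplicative constant, so that the product collapses into a single power. First I would dispose of the trivial case: if $P_{(d_w,d_z)}$ is itself irreducible we are done with $d=1$ and $Q_{(d_w',d_z')}=P_{(d_w,d_z)}$. If it is reducible, the definition supplies a factorisation into finitely many restrictive correspondences; since each factor has bidegree with strictly positive entries and these entries sum to $(d_w,d_z)$, every proper factor has strictly smaller bidegree, and iterating the factorisation on any reducible factor must terminate. This produces an expression $P_{(d_w,d_z)}=\prod_{i=1}^{r}P^{(i)}$ with $r\ge 2$ and every $P^{(i)}$ an irreducible restrictive correspondence, so that it suffices to show that the graphs $\mathscr{G}_{P^{(i)}}$ all coincide.

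I would then isolate the essential case $P=P^{(1)}\cdot P^{(2)}$ of two factors, which is exactly the situation set up in the discussion preceding the statement. Fixing a generic base point $z_0$, Remark \ref{rem:lists} furnishes a complete cell $A\times B\subseteq\mathscr{G}_{P}$ with $|A|=d_w$ and $|B|=d_z$, in which every pair in $A\times B$ lies on $\mathscr{G}_{P}$ and no pair $(z,w)$ with $z\notin A$ and $w\in B$ does. Writing the contributions of the two factors as $A=A^{(1)}\uplus A^{(2)}$ and $B=B^{(1)}\uplus B^{(2)}$, the strategy is to exploit the completeness of the cell together with the fact that each $P^{(i)}$ is itself restrictive: for a fixed $w\in B$, the $z$-fibre of $P$ over $w$ is the whole of $A$, and it decomposes as the $P^{(1)}$-fibre together with the $P^{(2)}$-fibre, each of which is the $z$-part of a genuine cell of the corresponding factor. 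Running this comparison as $w$ ranges over all of $B$ and invoking Definition \ref{defn:rescorr} for each factor should force the sub-lists $A^{(1)}$ and $A^{(2)}$ to be one and the same, and symmetrically $B^{(1)}=B^{(2)}$; since an irreducible restrictive correspondence is reduced, coincidence of these cells across the Zariski-dense family of base points upgrades, via conditions $(P1)$ and $(P2)$ and the Nullstellensatz, to $\mathscr{G}_{P^{(1)}}=\mathscr{G}_{P^{(2)}}$ and hence to $P^{(2)}=c\,P^{(1)}$ for a constant $c$.

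To pass from two factors to $r$ of them I would argue by induction on the number of factors, repeatedly applying the two-factor comparison and using the observation already visible in Example \ref{ex:onesq} that a power of a restrictive correspondence is again restrictive (the cells are unchanged and only their multiplicities scale), so that the partial products to which the comparison is applied stay within the restrictive class. Collecting the constants into $Q$ and setting $d=r$ then gives $P_{(d_w,d_z)}=\big[Q_{(d_w',d_z')}\big]^{d}$, and comparing $z$- and $w$-degrees of both sides yields $d\,d_w'=d_w$ and $d\,d_z'=d_z$, so that $d$ is a common divisor of $d_w$ and $d_z$, as required. The step I expect to be the main obstacle is the heart of the second paragraph, namely ruling out that two distinct irreducible factors interleave several of their smaller cells inside a single cell of $P$ while still producing identical $z$-lists; pinning the sub-lists down to genuine equality rather than to a mere set-theoretic coincidence of the combined list is where the full strength of restrictiveness at every base point — and the standing hypothesis that forbids the degenerate bidegree — must be brought to bear, and it is also where one must track multiplicities carefully so that the matched factor recurs with one and the same exponent in every slot, which is precisely what guarantees a clean power rather than a product of distinct irreducibles.
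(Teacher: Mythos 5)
Your proposal is correct and follows essentially the same route as the paper: the paper's own argument is exactly the two-factor comparison you describe, using Remark \ref{rem:lists} together with the restrictiveness of each irreducible factor to force the sub-lists $A^{(1)}$, $A^{(2)}$ (and $B^{(1)}$, $B^{(2)}$) to coincide, whence the factors agree and the product collapses to a power. The extra steps you supply (induction on the number of factors, the upgrade from coinciding cells at generic base points to proportionality of the polynomials, and the degree bookkeeping) are elaborations of details the paper leaves implicit.
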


The following example illustrates the fact that the product of two distinct irreducible restrictive polynomial correspondences is not a restrictive polynomial correspondence. 

\begin{example} 
Consider the following irreducible restrictive polynomial correspondences given by 
\[ P^{(1)}_{(3, 2)} (z, w) = z^{3}w^{2} + z^{3} - 6z^{2} + w^{2} + 11z - 6\ \ \ \text{and}\ \ \ P^{(2)}_{(2,2)} (z, w) = z^{2}w^{2} + z^{2} - 2zw + w^{2} - 2z + 1. \] 
Note however, that $P_{(5, 4)} = \left( P^{(1)}_{(3, 2)} \times P^{(2)}_{(2, 2)} \right)$ given by 
\begin{eqnarray*} 
P_{(5, 4)} (z, w) & = & z^{5}w^{4} + 2z^{5}w^{2} - 2z^{4}w^{3} + z^{3}w^{4} - 8z^{4}w^{2} + z^{2}w^{4} + z^{5} - 2z^{4}w + 13z^{3}w^{2} \\ 
& & \hspace{+1cm} - 8z^{4} + 12z^{3}w - 11z^{2}w^{2} - 2zw^{3} + w^{4} + 24z^{3} - 22z^{2}w + 9zw^{2} \\ 
& & \hspace{+1cm} - 34z^{2} + 12zw - 5w^{2} +23z - 6, 
\end{eqnarray*} 
is not a restrictive polynomial correspondence. For example, the list of points $L_{z} = \left[ 1, 2, 3 \right]$ correspond to the list of points $L_{w} = \left[ 0, 0 \right]$ {\it via} $P^{(1)}_{(3, 2)}$, while the list of points $L_{z} = \left[ 1, 1 \right]$ correspond to the list of points $L_{w} = \left[ 0, 1 \right]$ {\it via} $P^{(2)}_{(2, 2)}$. However, the union of the list of points in $L_{z}$ and the union of the list of points in $L_{w}$ for $P^{(1)}_{(3, 2)}$ and $P^{(2)}_{(2,2)}$ do not correspond to the list of points in $L_{z}$ and $L_{w}$ for $P_{(5, 4)}$, {\it i.e.}, $L_{z}\ =\ \left[ 1, 2, 3, 1, 1 \right]\ \ \cancel{\stackrel{P_{(5, 4)}}{\longleftrightarrow}}\ \ \left[ 0, 0, 0, 1 \right]\ =\ L_{w}$. 
\end{example} 

We conclude this section with a result regarding the rank of the matrix pertaining to a reducible restrictive polynomial correspondence. We will prove this theorem, later in Section \ref{sec:thmpf}. 

\begin{theorem} 
\label{thm:conj}
Let $P_{(d_{w}, d_{z})}$, as stated in Equation \eqref{eqn:pc} be a reducible restrictive polynomial correspondence that can be factorised as $P_{(d_{w}, d_{z})} = \left[ Q_{(d_{w}', d_{z}')} \right]^{d}$, where $Q_{(d_{w}', d_{z}')}$ and $d$ are determined as in Theorem \ref{thm:factor}. Then, the rank of the matrix associated to $P_{(d_{w}, d_{z})}$ is $(d + 1)$. 
\end{theorem}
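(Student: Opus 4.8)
The plan is to convert the statement about $\mathrm{rank}\, M_{P_{(d_w,d_z)}}$ into a statement about linear independence of two families of polynomials, by exploiting the rank-two structure of the irreducible factor $Q_{(d_w',d_z')}$. By the characterisation of irreducible restrictive polynomial correspondences established earlier (equivalently, the full rank factorisation of $M_{Q_{(d_w',d_z')}}$), we may write
\[
Q_{(d_w',d_z')}(z,w) \;=\; f_1(z)\,g_1(w) + f_2(z)\,g_2(w),
\]
so that $M_{Q_{(d_w',d_z')}} = \mathbf{f}_1\mathbf{g}_1^{\mathsf T} + \mathbf{f}_2\mathbf{g}_2^{\mathsf T}$ has rank $2$, where $\mathbf{f}_i,\mathbf{g}_i$ denote the coefficient vectors of $f_i,g_i$. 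First I would record that $f_1,f_2$ must be coprime, and likewise $g_1,g_2$: any common linear factor $\ell(z)$ of $f_1$ and $f_2$ would divide $Q_{(d_w',d_z')}$ and hence $P_{(d_w,d_z)}$, violating $(P1)$, while a proportionality $f_1=\lambda f_2$ would split $Q_{(d_w',d_z')}$ into a product of a polynomial in $z$ with a polynomial in $w$, again contradicting $(P1)$ and $(P2)$. Thus $f_1,f_2$ are coprime and linearly independent, and symmetrically for $g_1,g_2$.

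Next I would expand the $d$-th power by the binomial theorem,
\[
P_{(d_w,d_z)} = \big(f_1 g_1 + f_2 g_2\big)^d = \sum_{i=0}^{d} \binom{d}{i}\, \big(f_1^{\,i} f_2^{\,d-i}\big)(z)\;\big(g_1^{\,i} g_2^{\,d-i}\big)(w).
\]
Writing $\mathbf{u}_i$ for the coefficient vector of $f_1^{\,i} f_2^{\,d-i}$ (a polynomial in $z$ of degree at most $d_w = d\,d_w'$) and $\mathbf{v}_i$ for that of $g_1^{\,i} g_2^{\,d-i}$, this exhibits
\[
M_{P_{(d_w,d_z)}} = \sum_{i=0}^{d} \binom{d}{i}\, \mathbf{u}_i \mathbf{v}_i^{\mathsf T} = U\,\mathrm{diag}\!\left(\tbinom{d}{0},\ldots,\tbinom{d}{d}\right) V^{\mathsf T},
\]
where $U=[\mathbf{u}_0\,\cdots\,\mathbf{u}_d]$ and $V=[\mathbf{v}_0\,\cdots\,\mathbf{v}_d]$. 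This immediately yields the upper bound $\mathrm{rank}\, M_{P_{(d_w,d_z)}} \le d+1$.

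For the matching lower bound I would prove that the $d+1$ polynomials $f_1^{\,i}f_2^{\,d-i}$, $0\le i\le d$, are linearly independent over $\mathbb{C}$, and likewise the $g_1^{\,i}g_2^{\,d-i}$; this is the crux. Starting from a relation $\sum_i c_i f_1^{\,i} f_2^{\,d-i}=0$ and reducing modulo $f_2$ (which, being coprime to $f_1$, makes $f_1$ a unit in $\mathbb{C}[z]/(f_2)$) forces $c_d=0$; dividing by $f_2$ and iterating gives $c_0=\cdots=c_d=0$, the degenerate case $f_2$ constant being handled separately, since then $f_1$ is nonconstant and $1,f_1,\ldots,f_1^{\,d}$ have distinct degrees. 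Consequently $U$ and $V$ both have full column rank $d+1$, and as the diagonal binomial factor is invertible we conclude $\mathrm{rank}\, M_{P_{(d_w,d_z)}} = d+1$.

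The main obstacle is precisely this independence lemma together with its coprimality input: the remaining steps are bookkeeping, but the lower bound genuinely relies on $f_1,f_2$ (respectively $g_1,g_2$) being coprime, which is exactly where the defining properties $(P1)$ and $(P2)$ enter — without them the factorisation could degenerate and the rank could drop below $d+1$.
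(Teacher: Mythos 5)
Your proof is correct, and its skeleton is the same as the paper's: write the irreducible factor $Q_{(d_w',d_z')}$ in separated rank-two form, expand the $d$-th power by the binomial theorem, and reduce the claim to the linear independence of the two families $\{f_1^{\,i}f_2^{\,d-i}\}_{0\le i\le d}$ and $\{g_1^{\,i}g_2^{\,d-i}\}_{0\le i\le d}$; the paper packages this reduction as Lemma \ref{lem:rho}, which is precisely your $U\,\mathrm{diag}\bigl(\tbinom{d}{0},\dots,\tbinom{d}{d}\bigr)V^{\mathsf T}$ factorisation. Where you genuinely diverge is at the crux, the independence of the power products. The paper proves it by a determinant computation: after normalising (``without loss of generality'' the nonzero $2\times 2$ minor of the coefficients of the two factors sits in the top two rows), it evaluates the determinant of the top $(d+1)\times(d+1)$ submatrix of the coefficient matrix $\mathcal{G}$ via an LUP factorisation, obtaining $\prod_{r=0}^{d}\tbinom{d}{r}$ times a power of that minor, and for general position of the minor it only indicates that ``an appropriate square submatrix'' must be chosen. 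You instead derive coprimality of $f_1,f_2$ (and of $g_1,g_2$) from $(P1)$ and $(P2)$ and run a Bezout/reduction-mod-$f_2$ induction. Your route is more elementary and is airtight exactly where the paper is sketchiest: no normalisation, no determinant bookkeeping, and the degenerate case ($f_2$ constant) is dispatched by a degree argument. What the paper's route buys is a weaker hypothesis: it uses only that the two factors are not proportional --- which is automatic from the rank-two factorisation of $M_{Q_{(d_w',d_z')}}$ --- and never invokes $(P1)$/$(P2)$, so it applies verbatim to the $d$-th power of any rank-two separated form. This also shows that your closing sentence overstates the role of coprimality: it is a convenience, not a necessity, since if $f_1,f_2$ are merely linearly independent one may factor out $\gcd(f_1,f_2)^{d}$ from any alleged linear relation among the $f_1^{\,i}f_2^{\,d-i}$ and apply your mod-reduction argument to the coprime parts, so the rank still cannot drop below $d+1$.
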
 

\section{Irreducible restrictive polynomial correspondences: A characterisation} 
\label{sec:irpc} 

We now turn our focus to irreducible restrictive polynomial correspondences. In this section, we prove an important characterising result about irreducible restrictive polynomial correspondences. 

\begin{theorem} 
\label{thm:tfae}
Consider the polynomial correspondence $P_{(d_{w}, d_{z})}$, as stated in Equation \eqref{eqn:pc}. Then, the following statements are equivalent. 
\begin{enumerate} 
\item $P_{(d_{w}, d_{z})}$ is an irreducible restrictive polynomial correspondence. 
\item The equation $P_{(d_{w}, d_{z})} (z, w) = 0$ can be rewritten by separating the variables $z$ and $w$, as $R (z) = S (w)$, where $R$ and $S$ are rational maps of degrees $d_{w}$ and $d_{z}$ respectively. 
\item The rank of the matrix associated to $P_{(d_{w}, d_{z})}$ is $2$. 
\end{enumerate} 
\end{theorem}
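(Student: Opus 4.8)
The plan is to run the cycle $(1)\Rightarrow(3)\Rightarrow(2)\Rightarrow(1)$, treating $(2)\Leftrightarrow(3)$ as a purely linear-algebraic step. Throughout I would exploit the bilinear-form description of Equation \eqref{eqn:pc}: with $\mathbf{z}=(z^{d_w},\dots,z,1)^{T}$ and $\mathbf{w}=(w^{d_z},\dots,w,1)^{T}$ one has $P_{(d_w,d_z)}(z,w)=\mathbf{z}^{T}M_{P}\mathbf{w}$, and grouping by powers of $w$ gives $P_{(d_w,d_z)}(z,w)=\sum_{k=0}^{d_z}c_k(z)\,w^{k}$, where the coefficient polynomials $c_k(z)=\sum_{j}\Lambda_{(j,k)}z^{j}$ are exactly the columns of $M_{P}$ in \eqref{eqn:matrix} read as polynomials in $z$. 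Hence $\mathrm{rank}\,M_{P}$ equals the dimension of the span of the $c_k$. Thus rank $2$ is equivalent to the existence of polynomials $a,b$ with $c_k=\alpha_k a+\beta_k b$, i.e. $P=a(z)A(w)+b(z)B(w)$ with $A=\sum\alpha_k w^{k}$, $B=\sum\beta_k w^{k}$; setting $P=0$ and separating gives $R(z):=a(z)/b(z)=-B(w)/A(w)=:S(w)$, and conversely such a separation exhibits $M_P$ as a sum of two outer-product (rank-one) matrices. I would invoke $(P1)$ and $(P2)$ here: the rank cannot be $1$ (else $P=g(z)h(w)$ produces linear factors, contradicting $(P1)$–$(P2)$), and any common factor of $a,b$ (resp. $A,B$) would divide every $c_k$ and hence $P$, again contradicting $(P1)$; this simultaneously forces $\deg R=d_w$ and $\deg S=d_z$ exactly.

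\textbf{The implication $(2)\Rightarrow(1)$.} Given the separated form $R(z)=S(w)$, fix $z_0$; the $w$-solutions of $P(z_0,w)=0$ are precisely $S^{-1}(R(z_0))$, and for each such root $w^{(k)}$ the $z$-solutions of $P(z,w^{(k)})=0$ satisfy $R(z)=S(w^{(k)})=R(z_0)$, a value independent of $k$. Hence $L_z(w^{(k)}(z_0))$ equals the common set $R^{-1}(R(z_0))$ for every $k$, so $P$ is restrictive by Definition \ref{defn:rescorr}. For irreducibility I would argue by contradiction: if $P$ were reducible, Theorem \ref{thm:factor} writes $P=\left[Q\right]^{d}$ with $d\ge 2$, whence Theorem \ref{thm:conj} gives $\mathrm{rank}\,M_{P}=d+1\ge 3$, contradicting the rank $2$ obtained from the already-established $(2)\Rightarrow(3)$.

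\textbf{The crux $(1)\Rightarrow(3)$.} Restrictiveness says that for $z,z'$ lying in a common fiber $L_z$, the polynomials $P(z,\cdot)$ and $P(z',\cdot)$ have identical root sets (Remark \ref{rem:lists}) and hence are proportional, so the coefficient map $\Psi:z\mapsto[c_0(z):\cdots:c_{d_z}(z)]\in\mathbb{P}^{d_z}$ is constant exactly on fibers. By $(P1)$ the $c_k$ have no common root, so $\Psi$ is a morphism with $\Psi^{*}\mathcal{O}(1)=\mathcal{O}(d_w)$ whose image is a rational curve $C$ spanning $\mathbb{P}^{r-1}$, where $r=\mathrm{rank}\,M_{P}$. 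The degree relation $\deg(\Psi\text{ onto }C)\cdot\deg C=d_w$ then does the work: for an irreducible correspondence the generic fiber $L_z$ has exactly $d_w$ points, so $\Psi$ is $d_w$-to-one onto $C$, forcing $\deg C=1$; a degree-one curve is a line, spanning $\mathbb{P}^{1}$, so $r=2$.

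\textbf{Main obstacle.} The delicate point is precisely this last count: one must verify that for an irreducible restrictive correspondence the generic $z$-fiber consists of $d_w$ \emph{distinct} points and that $\Psi$ separates distinct fibers, so that $\Psi$ is genuinely $d_w$-to-one. This is exactly where irreducibility is indispensable — for a reducible $P=\left[Q\right]^{d}$ the same argument yields fibers of size $d_w/d$ and image degree $d$, recovering $\mathrm{rank}=d+1$ in harmony with Theorem \ref{thm:conj}. If one prefers to avoid the language of morphisms, the equivalent elementary formulation I would use is that each ratio $c_k/c_0$ is a rational function constant on the $d_w$-point fibers, hence of degree exactly $d_w$; any two such functions share all their fibers and so differ by post-composition with a M\"obius map, which forces $c_k\in\mathrm{span}\{p,q\}$ where $R=p/q$, again giving $\mathrm{rank}\,M_{P}=2$ and closing the cycle.
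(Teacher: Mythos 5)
Your treatment of $(2)\Leftrightarrow(3)$ and of $(2)\Rightarrow(1)$ is correct and is in substance the paper's own argument: the linear-algebra step is the paper's full-rank factorisation $M_{P_{(d_{w}, d_{z})}} = \mathcal{R}\mathcal{S}$ into an $(d_{w}+1)\times 2$ times $2\times(d_{z}+1)$ product, with the same appeal to $(P1)$--$(P2)$ to exclude ranks $0$ and $1$, and the reducible case is dispatched exactly as in the paper via Theorem \ref{thm:factor} together with the rank count of Theorem \ref{thm:conj}. Your ordering $(1)\Rightarrow(3)\Rightarrow(2)\Rightarrow(1)$ also keeps the same non-circular dependency structure as the paper, since the proof of Theorem \ref{thm:conj} only requires the rank-$2$ decomposition of an irreducible correspondence, which in your scheme would be established first.

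The genuine gap is in $(1)\Rightarrow(3)$, at exactly the point you flag as ``the main obstacle'' and then leave unresolved: the claim that for an irreducible restrictive correspondence the generic fiber $L_{z}(w_{k})$ consists of $d_{w}$ \emph{distinct} points. Your count $\deg(\Psi \text{ onto } C)\cdot \deg C = d_{w}$ forces $\deg C = 1$ only if a generic fiber of $\Psi$ contains at least $d_{w}$ points, and the only points available are the distinct elements of $L_{z}(w_{k})$; but the lists are root \emph{multisets}. If $P$ has a repeated polynomial factor, say $P = F^{2}H$ with $F$ coprime to $H$, then a generic list carries at most $\deg_{z}F + \deg_{z}H = d_{w} - \deg_{z}F < d_{w}$ distinct points and the argument collapses (consistently: this is precisely how $[Q]^{d}$ ends up with a degree-$d$ image curve and rank $d+1$). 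So your proof needs the statement: irreducible restrictive $\Rightarrow$ $P$ is squarefree. This is not automatic, because ``irreducible'' in this paper means \emph{no factorisation into restrictive correspondences}, not polynomial irreducibility; one must show that a restrictive $P$ with a repeated factor always admits such a factorisation (e.g.\ a multiplicity-bookkeeping argument showing that if $P = F^{2}H$ is restrictive then $F$ and $H$ are themselves restrictive, whence $P = F\cdot F\cdot H$ is reducible). Nothing in your proposal supplies this, and your closing ``elementary formulation'' has the identical hole: ``constant on the $d_{w}$-point fibers, hence of degree exactly $d_{w}$'' presumes the same distinctness. Note that this is exactly where your route diverges from the paper's: instead of $(1)\Rightarrow(3)$ the paper proves $(1)\Rightarrow(2)$ by applying Vi\`ete's formulas to the lists, i.e.\ it manipulates the elementary symmetric functions of the roots, which account for multiplicities automatically, and it produces the rational maps $R = M\circ q_{m}/q_{0}$ and $S = p_{n}/p_{0}$ directly without ever needing fiber points to be distinct. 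Your coefficient-map picture (rank $=$ $1$ plus the degree of the image curve) is attractive and, if completed, would unify Theorems \ref{thm:tfae} and \ref{thm:conj}; but as it stands the cycle does not close without a proof of the squarefreeness claim.
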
 

\begin{proof} 
{\bf 1 $\Longrightarrow$ 2}: Assuming $P_{(d_{w}, d_{z})}$ to be an irreducible restrictive polynomial correspondence, we start by rewriting the equation $P_{(d_{w}, d_{z})} (z, w) = 0$ in two different manners as follows: 
\begin{equation} 
\label{eqn:polypq} 
q_{0}(z) w^{d_{z}} + q_{1}(z) w^{d_{z} - 1} + \cdots + q_{d_{z}}(z)\ =\ 0\ \ \text{and}\ \ p_{0}(w) z^{d_{w}} + p_{1}(w) z^{d_{w} - 1} + \cdots + p_{d_{w}}(w)\ =\ 0, 
\end{equation} 
where $q_{m}$'s are polynomials in $z$ of degree atmost $d_{w}$ and $p_{n}$'s are polynomials in $w$ of degree atmost $d_{z}$. Further, by Remark \ref{rem:lists}, we know that 
\[ L_{z} \left( w_{k} \right)\ =\ \left[ z_{1}, \cdots, z_{d_{w}} \right]\ \ \stackrel{P_{(d_{w}, d_{z})}}{\longleftrightarrow}\ \ \left[ w_{1}, \cdots, w_{d_{z}} \right]\ =\ L_{w} \left( z_{j} \right). \]
Owing to the hypothesis on $P_{(d_{w}, d_{z})}$, there exists at least one $0 \le m \le d_{z}$ such that the degree of the polynomial $q_{m}$ is equal to $d_{w}$. Suppose the degree of the polynomial $q_{0}$ is equal to $d_{w}$, then one may choose any non-zero polynomial $q_{m}$ for $1 \le m \le d_{z}$ and continue with the proof. However, if the degree of $q_{0}$ is not equal to $d_{w}$, then we choose that $1 \le m \le d_{w}$ for which the degree of the polynomial $q_{m}$ is equal to $d_{w}$ and proceed with the proof. Then, Vi\`{e}te's formula (see \cite{vin:2003}, page 89) asserts that 
\[ \frac{q_{m}(z_{j})}{q_{0}(z_{j})}\ =\ \left(-1\right)^{m} \sum_{1\, \le\, i_{1}\, <\, \cdots\, <\, i_{m}\, \le\, d_{z}} w_{i_{1}} \times \cdots \times w_{i_{m}}\ \ \forall z_{j} \in L_{z} \left(w_{k}\right). \]
In fact, the precise solutions to the equation of degree $d_{w}$, namely, 
\[ \dfrac{q_{m}(z)}{q_{0}(z)}\ =\ \left(-1\right)^{m} \sum_{1\, \le\, i_{1}\, <\, \cdots\, <\, i_{m}\, \le\, d_{z}} w_{i_{1}} \times \cdots \times w_{i_{m}} \] 
is given by the $d_{w}$ points $z_{j} \in L_{z} \left(w_{k}\right)$. Similarly, there exists some $1 \le n \le d_{z}$ such that the solutions to the equation of degree $d_{z}$ given by 
\[ \frac{p_{n}(w)}{p_{0}(w)}\ =\ \left(-1\right)^{n} \sum_{1\, \le\, i_{1}\, <\, \cdots\, <\, i_{n}\, \le\, d_{w}} z_{i_{1}} \times \cdots \times z_{i_{n}} \] 
is precisely the $d_{z}$ points $w_{k} \in L_{w} \left(z_{j}\right)$. Note that $\dfrac{q_{m}}{q_{0}}$ and $\dfrac{p_{n}}{p_{0}}$ are rational maps in the variables $z$ and $w$ of degree $d_{w}$ and $d_{z}$ respectively. Finally, we make use of an automorphism of the Riemann sphere, namely a M\"{o}bius map $M$, and define $R = M \circ \dfrac{q_{m}}{q_{0}}$ and $S = \dfrac{p_{n}}{p_{0}}$ to complete the proof, in this part. 

{\bf 2 $\Longrightarrow$ 3}: Owing to the hypothesis, we assume that the equation $P_{(d_{w}, d_{z})} (z, w) = 0$ that yields the zero set of the correspondence can be alternately expressed as 
\begin{equation} 
\label{eqn:sep} 
R(z)\ =\ \frac{A_{d_{w}} z^{d_{w}} + \cdots + A_{0}}{B_{d_{w}} z^{d_{w}} + \cdots + B_{0}}\ =\ \frac{C_{d_{z}} w^{d_{z}} + \cdots + C_{0}}{D_{d_{z}} w^{d_{z}} + \cdots + D_{0}}\ =\ S(w), 
\end{equation} 
where $A_{j}, B_{j}, C_{k}, D_{k} \in \mathbb{C}$. Then, the matrix $M_{P_{(d_{w}, d_{z})}}$ of order $(d_{w} + 1) \times (d_{z} + 1)$ pertaining to the correspondence is given by 
\[ \begin{pmatrix} A_{d_{w}} D_{d_{z}} - B_{d_{w}} C_{d_{z}} & A_{d_{w}} D_{d_{z} - 1} - B_{d_{w}} C_{d_{z} - 1} & \cdots & A_{d_{w}} D_{0} - B_{d_{w}} C_{0} \\ 
A_{d_{w} - 1} D_{d_{z}} - B_{d_{w} - 1} C_{d_{z}} & A_{d_{w} - 1} D_{d_{z} - 1} - B_{d_{w} - 1} C_{d_{z} - 1} & \cdots & A_{d_{w} - 1} D_{0} - B_{d_{w} - 1} C_{0} \\ 
\vdots & \vdots & \ddots & \vdots \\ 
A_{0} D_{d_{z}} - B_{0} C_{d_{z}} & A_{0} D_{d_{z} - 1} - B_{0} C_{d_{z} - 1} & \cdots & A_{0} D_{0} - B_{0} C_{0} 
\end{pmatrix}, \] 
that can be factorised as the product of two matrices, say $\mathcal{R}$ and $\mathcal{S}$ of sizes $(d_{w} + 1) \times 2$ and $2 \times (d_{z} + 1)$ respectively as 
\[ \mathcal{R} \mathcal{S}\ =\ \begin{pmatrix} A_{d_{w}} & B_{d_{w}} \\ A_{d_{w} - 1} & B_{d_{w} - 1} \\ \vdots & \vdots \\ A_{0} & B_{0} \end{pmatrix} \begin{pmatrix} D_{d_{z}} & D_{d_{z} - 1} & \cdots & D_{0} \\ -C_{d_{z}} & -C_{d_{z} - 1} & \cdots & -C_{0} \end{pmatrix}. \]  
Since $R$ and $S$ are non-constant rational maps of degree $d_{w}$ and $d_{z}$ respectively, the matrices $\mathcal{R}$ and $\mathcal{S}$ are necessarily of rank $2$. This forces that the rank of $M_{P_{(d_{w}, d_{z})}}$ is atmost $2$. It is a simple observation that ${\rm Rank} \left(M_{P_{(d_{w}, d_{z})}}\right) \ne 0$, since it is not the zero matrix. Thus, we prove that the ${\rm Rank} \left(M_{P_{(d_{w}, d_{z})}}\right) \ne 1$ to complete the proof of this part. 

We start by assuming that ${\rm Rank} \left(M_{P_{(d_{w}, d_{z})}}\right) = 1$ and end up with a contradiction. Then, one may suppose that there exists one column, say ${\rm Col}_{j}$ for $0 \le j \le d_{w}$ such that every other column in $M_{P_{(d_{w}, d_{z})}}$ is a scalar multiple of ${\rm Col}_{j}$. This then gives rise to a factorisation of the equation $P_{(d_{w}, d_{z})} = 0$ as $q_{j}(z) T(w) = 0$, where $q_{j}$ is the polynomial, as mentioned in Equation \eqref{eqn:polypq} and $T$ is a polynomial in $w$. This is a contradiction to the properties $(P1)$ and $(P2)$ of the correspondence, as stated in Section \ref{sec:rpc}. 

{\bf 3 $\Longrightarrow$ 2} By hypothesis, we consider the polynomial correspondence $P_{(d_{w}, d_{z})}$, as stated in Equation \eqref{eqn:pc} such that the rank of the matrix associated to $P_{(d_{w}, d_{z})}$ is $2$. The strategy of the proof is to employ the full rank factorisation theorem, as can be found in, say \cite{hj:2013, po:1999}, that asserts that there exists two matrices, say $\mathcal{R}$ and $\mathcal{S}$ of orders $(d_{w} + 1) \times 2$ and $2 \times (d_{z} + 1)$ each of whose rank is $2$, so that $M_{P_{(d_{w}, d_{z})}} = \mathcal{R} \mathcal{S}$ and to construct the rational maps $R$ and $S$ using the matrices $\mathcal{R}$ and $\mathcal{S}$. 

Note that there exists two columns, say ${\rm Col}_{j}$ and ${\rm Col}_{k}$, in $M_{P_{(d_{w}, d_{z})}}$ that form a basis for the column space in order that every column of the matrix $M_{P_{(d_{w}, d_{z})}}$ can be written as a linear combination of ${\rm Col}_{j}$ and ${\rm Col}_{k}$, {\it i.e.}, ${\rm Col}_{m} = D_{m} {\rm Col}_{j} - C_{m} {\rm Col}_{k}\ \forall 0 \le m \le d_{z}$. Define 
\[ \mathcal{R}\ =\ \begin{pmatrix} {\rm Col}_{j} & {\rm Col}_{k} \end{pmatrix}\ \ \ \ \mathcal{S}\ =\ \begin{pmatrix} D_{d_{z}} & \cdots & D_{0} \\ -C_{d_{z}} & \cdots & -C_{0} \end{pmatrix}, \]
and consider the rational maps pertaining to the matrices $\mathcal{R}$ and $\mathcal{S}$ given by 
\[ R(z)\ =\ \frac{A_{d_{w}} z^{d_{w}} + \cdots + A_{0}}{B_{d_{w}} z^{d_{w}} + \cdots + B_{0}}\ \ \text{and}\ \ S(w)\ =\ \frac{C_{d_{z}} w^{d_{z}} + \cdots + C_{0}}{D_{d_{z}} w^{d_{z}} + \cdots + D_{0}}, \] 
where ${\rm Col}_{j} = \begin{pmatrix} A_{n} \end{pmatrix}_{d_{w}\, \ge\, n\, \ge\, 0}$ and ${\rm Col}_{k} = \begin{pmatrix} B_{n} \end{pmatrix}_{d_{w}\, \ge\, n\, \ge\, 0}$, thereby enabling us to write the equation $P_{(d_{w}, d_{z})} (z, w)= 0$ as $R(z) = S(w)$. 

{\bf 2 $\Longrightarrow$ 1} Owing to the hypothesis, we first observe that $P_{(d_{w}, d_{z})}$ is a restrictive polynomial correspondence, by definition, meaning $L_{z} \left( w_{k} \right) = \left[ z_{1}, \cdots, z_{d_{w}} \right] \stackrel{P_{(d_{w}, d_{z})}}{\longleftrightarrow} \left[ w_{1}, \cdots, w_{d_{z}} \right] = L_{w} \left( z_{j} \right)$. In order to prove that $P_{(d_{w}, d_{z})}$ represents an irreducible restrictive polynomial correspondence by assuming that the equation $P_{(d_{w}, d_{z})} = 0$ can be rewritten as $R(z) = S(w)$, we prove the contra-positive statement. 

On that note, we assume $P_{(d_{w}, d_{z})}$ to be a reducible restrictive polynomial correspondence. Then, by Theorem \ref{thm:factor} one can factorise $P_{(d_{w}, d_{z})}$ as $\left[ Q_{(d_{w}', d_{z}')} \right]^{d}$, where $Q_{(d_{w}', d_{z}')}$ is an irreducible restrictive polynomial correspondence and $d > 1$ is a common divisor of $d_{w}$ and $d_{z}$ satisfying $d d_{w}' = d_{w}$ and $d d_{z}' = d_{z}$. Further, from the $1 \Longrightarrow 2$ part of the current theorem, one can write the equation $Q_{(d_{w}', d_{z}')} = 0$ as $R_{1}(z) = S_{1}(w)$, where $R_{1}$ and $S_{1}$ are rational maps of appropriate degrees. Observing that the equation $\left[ R_{1}(z) - S_{1}(w) \right]^{d} = 0$ can never be written as $R(z) = S(w)$ for any rational maps $R$ and $S$, whenever $d > 1$, finally completes the proof. 
\end{proof} 

As written in the proof of the last implication, even though $\left[ Q_{(d_{w}', d_{z}')} \right]^{d} = 0$ can never be written as $R(z) = S(w)$ for any rational maps $R$ and $S$, when $d > 1$, Lemma \ref{lem:rho} describes the precise alternate expression for such correspondences. 

One obtains the results that characterise, what the authors call a map of $d$-tuples in \cite{sst:2023}, as a corollary to Theorem \ref{thm:tfae}, when $d_{z} = d_{w} = d \ge 2$. Moreover, the specific case when $d = 2$ was investigated by Bullett in \cite{sb:1988}. 

\begin{example} 
Recall the correspondence $P_{(3, 2)}$ in Example \eqref{ex:one}, as stated in Section \ref{sec:rpc}. Then, note that the equation $P_{(3, 2)} (z, w) = 0$ can be rewritten as $\dfrac{z^{3} + 11 z}{z^{2} + 1} = \dfrac{w^{2} + 2 w + 6}{i w^{2} + 5 w + 1}$. Further, the matrix $M_{P_{(3, 2)}}$ pertaining to the given correspondence has rank $2$.  
\end{example} 

The next two remarks concern a certain symmetry that one may be interested in polynomial correspondences. 

\begin{remark} 
\begin{enumerate} 
\item Suppose $P_{(d_{w}, d_{z})}$ is an irreducible restrictive polynomial correspondence that satisfies $P_{(d_{w}, d_{z})} (z, w) = 0$ {\it iff} $P_{(d_{w}, d_{z})} (-z, -w) = 0$. Then, any two consecutive columns of the matrix $M_{P_{(d_{w}, d_{z})}}$ pertaining to the correspondence are linearly independent, provided neither of them is the zero vector. 
\item Suppose in Equation \eqref{eqn:sep}, we have the quantities $A_{j}, B_{j}, C_{k}$ and $D_{k}$ for all $0 \le j \le d_{w}$ and for all $0 \le k \le d_{z}$ to be real. Then, $P_{(d_{w}, d_{z})} (z, w) = 0$ {\it iff} $P_{(d_{w}, d_{z})} (\overline{z}, \overline{w}) = 0$. The converse is also true, in the sense that suppose the irreducible restrictive polynomial correspondence satisfies $P_{(d_{w}, d_{z})} (z, w) = 0$ {\it iff} $P_{(d_{w}, d_{z})} (\overline{z}, \overline{w}) = 0$. Then, $P_{(d_{w}, d_{z})} \in \mathbb{R}[z, w]$, upto a constant multiple. 
\end{enumerate} 
\end{remark} 

\section{$*$-product of irreducible restrictive polynomial correspondences} 
\label{sec:*prod} 

In this section, we consider two irreducible restrictive polynomial correspondences, define an operation between them and investigate when the resulting expression yields something interesting, possibly an irreducible restrictive polynomial correspondence of appropriate bidegree. Towards that end, consider $P_{(m_{w}, m_{z})}$ and $Q_{(n_{w}, n_{z})}$ with $m_{z} = n_{w} = d$ given by 
\[ P_{(m_{w}, d)} \left( z, w \right)\ =\ \sum_{j\, =\, 0}^{m_{w}} \sum_{k\, =\, 0}^{d} \Lambda_{\left( j, k \right)} z^{j} w^{k}\ \ \text{and}\ \ Q_{(d, n_{z})} \left( z, w \right)\ =\ \sum_{j\, =\, 0}^{d} \sum_{k\, =\, 0}^{n_{z}} \Omega_{\left( j, k \right)} z^{j} w^{k}. \] 

As mentioned in Equation \eqref{eqn:polypq}, one can then express these correspondences $P_{(m_{w}, d)}$ and $Q_{(d, n_{z})}$ alternatively as 
\begin{equation} 
\label{eqn:polypqo} 
P_{(m_{w}, d)} \left( z, w \right)\ =\ \sum_{j\, =\, 0}^{m_{w}} p_{j} (w) z^{j}\ \ \text{and}\ \ Q_{(d, n_{z})} \left( z, w \right)\ =\ \sum_{k\, =\, 0}^{n_{z}} q_{k} (z) w^{k}, 
\end{equation} 
where $p_{j}$'s and $q_{k}$'s are polynomials of degree atmost $d$ in $w$ and $z$ respectively given by $\displaystyle{p_{j} (w) = \sum_{r\, =\, 0}^{d} \alpha_{r}^{(j)} w^{r}}$ for $0 \le j \le m_{w}$ and $\displaystyle{q_{k} (z) = \sum_{r\, =\, 0}^{d} \beta_{r}^{(k)} z^{r}}$ for $0 \le k \le n_{z}$. For such a collection of polynomials consisting of $p_{j}$'s and $q_{k}$'s, define the $*$-product of $P_{(m_{w}, d)}$ and $Q_{(d, n_{z})}$ as: 
\begin{equation} 
\label{defn:star} 
\left( P_{(m_{w}, d)} * Q_{(d, n_{z})} \right) (z, w)\ =\ \sum_{j\, =\, 0}^{m_{w}} \sum_{k\, =\, 0}^{n_{z}} \langle p_{j}, q_{k} \rangle z^{j} w^{k}\ \ \text{where}\ \ \langle p_{j}, q_{k} \rangle\ \ =\ \ \sum_{r\, =\, 0}^{d} \alpha_{r}^{(j)} \beta_{r}^{(k)}. 
\end{equation} 

We first discuss the cases when the $*$-product between any two irreducible restrictive polynomial correspondences results in an expression that violates the conditions $(P1)$ and $(P2)$, as mentioned in Section \ref{sec:rpc} for polynomial correspondences and hence, are not desirable. Consider the following examples. 

\begin{example} 
\label{ex:zero} 
Given two irreducible restrictive polynomial correspondences, 
\begin{eqnarray*} 
P_{(3, 4)} (z, w) & = & 2z^{3}w^{4} + z^{3}w^{3} - w^{4} + 7z^{3} - 7w^{3} - w^{2} - w + 4\ \ \text{and} \\ 
Q_{(4, 2)} (z, w) & = & z^{4}w^{2} - 2z^{3}w^{2} - 53z^{4} + 13z^{2}w^{2} + 15z^{3} + 13. 
\end{eqnarray*} 
We note that $\langle p_{j}, q_{k} \rangle = 0$ for all $j, k$, that yields $\left( P_{(3, 4)} * Q_{(4, 2)} \right) \equiv 0$. 
\end{example} 

\begin{example} 
\label{ex:zintow} 
Let 
\begin{eqnarray*} 
P_{(3, 2)} (z, w) & = & 2iz^{3}w^{2} + 3z^{3}w + z^{3} + 5z^{2}w + 2izw^{2} + 3zw + z + 2iw^{2} + 28w + 1\ \ \text{and} \\ 
Q_{(2, 3)} (z, w) & = & z^{2}w^{3} + 1  
\end{eqnarray*} 
be the two given irreducible restrictive polynomial correspondences. Then, we obtain 
\begin{eqnarray*} 
\left( P_{(3, 2)} * Q_{(2, 3)} \right) (z, w) & = & 2iz^{3}w^{3} + z^{3} + 2izw^{3} + z + 2iw^{3} + 1 \\ 
& = & \left( z^{3} + z + 1 \right) \left( 2iw^{3} + 1 \right). 
\end{eqnarray*} 
In this example, $P_{(3, 2)} * Q_{(2, 3)}$ is undesirable since it disobeys the conditions $(P1)$ and $(P2)$, as mentioned in Section \ref{sec:rpc} for polynomial correspondences. 
\end{example} 

Since the $*$-product of correspondences have reduced to being uninteresting in the above two examples, we examine the $*$-product further using alternate expressions of $P_{(m_{w}, d)}$ and $Q_{(d, n_{z})}$. Rewriting the equation $P_{(m_{w}, d)} \left(z, w\right) = 0$, by virtue of Theorem \ref{thm:tfae}, we have 
\begin{equation} 
\label{eqn:rationalforp} 
R_{1}(z)\ =\ \frac{A_{m_{w}} z^{m_{w}} + \cdots + A_{0}}{B_{m_{w}} z^{m_{w}} + \cdots + B_{0}}\ \ =\ \ \frac{C_{d} w^{d} + \cdots + C_{0}}{D_{d} w^{d} + \cdots + D_{0}}\ =\ S_{1}(w). 
\end{equation} 
Similarly, the equation $Q_{(d, n_{z})} \left(z, w\right) = 0$ can be rewritten as 
\begin{equation} 
\label{eqn:rationalforq} 
R_{2}(z)\ =\ \frac{E_{d} z^{d} + \cdots + E_{0}}{F_{d} z^{d} + \cdots + F_{0}}\ \ =\ \ \frac{G_{n_{z}} w^{n_{z}} + \cdots + G_{0}}{H_{n_{z}} w^{n_{z}} + \cdots + H_{0}}\ =\ S_{2}(w). 
\end{equation} 
We now define four bivariate polynomials denoted by $f_{i}$ for $1 \le i \le 4$, given by 
\begin{equation} 
\label{eqn:fi} 
\begin{array}{r c l r c l} 
f_{1} (z, w) & = & {\rm Nr.} \left(S_{1}\right) {\rm Dr.} \left(R_{2}\right), & f_{2} (z, w) & = & {\rm Dr.} \left(S_{1}\right) {\rm Nr.} \left(R_{2}\right), \vspace{+5pt} \\ 
f_{3} (z, w) & = & {\rm Nr.} \left(S_{1}\right) {\rm Nr.} \left(R_{2}\right), & f_{4} (z, w) & = & {\rm Dr.} \left(S_{1}\right) {\rm Dr.} \left(R_{2}\right), 
\end{array} 
\end{equation} 
where ${\rm Nr.}$ and ${\rm Dr.}$ denotes the numerator and the denominator of the appropriate rational function, as written above. Note that each $f_{i} (z, w)$ represents a bivariate polynomial of bidegree $(d, d)$ and hence, is associated to a matrix of size $(d + 1) \times (d + 1)$, as written in Equation \eqref{eqn:matrix}. Denoting by ${\rm Tr} (f_{i})$, the trace of the matrix associated to $f_{i}$, we now define 
\[ S(w)\ =\ \frac{\left[H_{n_{z}} {\rm Tr} (f_{3}) - G_{n_{z}} {\rm Tr} (f_{1})\right] w^{n_{z}} + \cdots + \left[H_{0} {\rm Tr} (f_{3}) - G_{0} {\rm Tr} (f_{1})\right]}{\left[H_{n_{z}} {\rm Tr} (f_{2}) - G_{n_{z}} {\rm Tr} (f_{4})\right] w^{n_{z}} + \cdots + \left[H_{0} {\rm Tr} (f_{2}) - G_{0} {\rm Tr} (f_{4})\right]}. \]
We let $R = R_{1}$ and consider the equation $R(z) = S(w)$ that can be re-written as 
\begin{equation} 
\label{eqn:tea} 
T (z, w) = 0. 
\end{equation} 

We first prove that $\left( P_{(m_{w}, d)} * Q_{(d, n_{z})} \right) = T$. In order to do the same, we prove that the coefficient of the $z^{j} w^{k}$ in both the sides of the above equation are equal for any $0 \le j \le m_{w}$ and $0 \le k \le n_{z}$. Owing to the representation of $P_{(m_{w}, d)}$ and $Q_{(d, n_{z})}$, as given in Equation \eqref{eqn:polypqo} and the alternative manner in which the equations $P_{(m_{w}, d)} (z, w) = 0$ and $Q_{(d, n_{z})} (z, w) = 0$ can be written, as mentioned in Equations \eqref{eqn:rationalforp} and \eqref{eqn:rationalforq}, we note that for $0 \le j \le m_{w}$ and $0 \le k \le n_{z}$, we have 
\begin{eqnarray*} 
\langle p_{j}, q_{k} \rangle & = & A_{j} \left( H_{k} {\rm Tr} (f_{2}) - G_{k} {\rm Tr} (f_{4}) \right) - B_{j} \left( H_{k} {\rm Tr} (f_{3}) - G_{k} {\rm Tr} (f_{1}) \right) \\ 
& = & \text{Coefficient of $z^{j} w^{k}$ in $T (z, w)$}. 
\end{eqnarray*} 

Making use of this new representation for the $*$-product, we reexamine Examples \ref{ex:zero} and \ref{ex:zintow} to understand why they gave rise to undesirable $*$-products. One can note that in Example \ref{ex:zero}, we obtain $S(w)$ as an indeterminate, while in Example \ref{ex:zintow}, ${\rm Nr.} (S) = 0$. Further, we note that we can avoid these undesirabilities by demanding the following conditions: 
\[ \langle {\rm Nr.} (S_{1}), q_{0} \rangle\ \ \ne\ \ 0\ \ \ \ \text{and}\ \ \ \ \langle {\rm Dr.} (S_{1}), q_{n_{z}} \rangle\ \ \ne\ \ 0. \] 

In addition to the cases explained above, it is quite possible that $S(w)$ turns out to be a scalar quantity. This results in $\dfrac{H_{k}}{G_{k}} = \alpha$ for all $0 \le k \le n_{z}$, for some $\alpha \in \mathbb{C}$. However, this is not possible, since $S_{2}$ is a rational map of degree $n_{z}$. The only other possibility by way of which we may end up with $S(w)$ being a scalar quantity is when there exists a $\beta \in \mathbb{C}$ such that ${\rm Tr}(f_{2}) - \beta {\rm Tr}(f_{3}) = 0$ and ${\rm Tr}(f_{4}) - \beta {\rm Tr}(f_{1}) = 0$. However, if $S(w)$ turns out to be a scalar, we will lose one variable in our expression of $T(z, w)$. We provide one such example, here. 

\begin{example} 
Consider the irreducible restrictive polynomial correspondences 
\begin{eqnarray*} 
P_{(3,4)} (z,w) & = & 2 z^{3} w^{4} + z^{3} w^{3} - w^{4} + 7 z^{3} - 7 w^{3} - w^{2} - w + 4\ \ \text{and} \\ 
Q_{(4,2)} (z,w) & = & z^{4} w^{2} - 2 z^{3} w^{2} - 53 z^{4} + 13 z^{2} w^{2} - 15 z^{3}  + 13, 
\end{eqnarray*} 
which when equated to $0$ can be alternately expressed as 
\[ z^{3}\ = \dfrac{w^{4} + 7 w^{3} + w^{2} + w - 4}{2 w^{4} + w^{3} + 7}\ \ \text{and}\ \ \dfrac{53 z^{4} + 15 z^{3} - 13}{z^{4} - 2 z^{3} + 13 z^{2}}\ =\ w^{2}, \] 
respectively. We then observe in this case that 
\[ {\rm Tr}(f_{1})\ =\ 0,\ \ {\rm Tr}(f_{2})\ =\ 30,\ \ {\rm Tr}(f_{3})\ =\ 210,\ \ \text{and}\ \ {\rm Tr}(f_{4})\ =\ 0. \] 
Choosing $\beta = \dfrac{1}{7}$, we see that ${\rm Tr}(f_{2}) = \beta {\rm Tr}(f_{3})$ and ${\rm Tr}(f_{4}) = \beta {\rm Tr}(f_{1})$, thus, reducing $S(w)$ to be a constant function, a case that we prefer to avoid, since the degree of the $w$-variable in the polynomial is zero and hence, $T(z, w)$ is not even a correspondence, according to the conditions mentioned in Section \ref{sec:rpc}. 
\end{example} 

We now focus on the conditions to obtain $T(z, w)$ as an irreducible restrictive polynomial correspondence, by making such demands as appropriate to overcome the above mentioned limitations. In what follows, we consider irreducible restrictive polynomial correspondences $P_{(m_{w}, d)}$ and $Q_{(d, n_{z})}$. The polynomials $p_{j}$'s and $q_{k}$'s are those given in Equation \ref{eqn:polypqo}, the rational maps $R_{1}, R_{2}, S_{1}$ and $S_{2}$ are the ones mentioned in Equation \ref{eqn:rationalforp}. We also consider the four bivariate polynomials $f_{i} (z, w)$, where $i = 1, 2, 3, 4$ as in Equation \ref{eqn:fi}. 

\begin{definition}
A pair of irreducible restrictive polynomial correspondences $P_{(m_{w}, d)}$ and $Q_{(d, n_{z})}$ is said to be \emph{non-degenerate} if it satisfies the following criteria: 
\begin{enumerate}
\item $\langle {\rm Nr.} (S_{1}), q_{0} \rangle \ne 0\ \ \ \text{and}\ \ \ \langle {\rm Dr.} (S_{1}), q_{n_{z}} \rangle \ne 0$; 
\item There exists no $\beta \in \mathbb{C}$ that satisfies ${\rm Tr}(f_{2}) = \beta {\rm Tr}(f_{3})$ and ${\rm Tr}(f_{4}) = \beta {\rm Tr}(f_{1})$ simultaneously; 
\item $\langle p_{m_{w}}, q_{n_{z}} \rangle \ne 0$ and $\langle p_{0}, q_{0} \rangle \ne 0$. 
\end{enumerate}
\end{definition}

Then, we have the following theorem. 

\begin{theorem} 
\label{thm:*mult} 
Let $P_{(m_{w}, d)}$ and $Q_{(d, n_{z})}$ be a pair of non-degenerate irreducible restrictive polynomial correspondences. Then, $T = P_{(m_{w}, d)} * Q_{(d, n_{z})}$ is an irreducible restrictive polynomial correspondence of bidegree $(m_{w}, n_{z})$. 
\end{theorem}

Since the genesis of the conditions mentioned in the hypothesis of the theorem have been explained already, an appeal to Theorem \ref{thm:tfae} completes the proof of Theorem \ref{thm:*mult}. We also note that instead of fixing $R(z) = R_{1}(z)$ and defining $S(w)$ using the bivariate polynomials $f_{i}$, for $1 \le i \le 4$, one may as well fix $S(w) = S_{2}(w)$ and obtain $R(z)$ using a new set of bivariate polynomials, say $g_{i}$ for $1 \le i \le 4$, by means of the other three rational maps $R_{1}, S_{1}$ and $R_{2}$. We now write an example to illustrate the idea that Theorem \ref{thm:*mult} propounds. 

\begin{example} 
Consider the following irreducible restrictive polynomial correspondences 
\begin{eqnarray*}
P_{(3, 4)} (z,w) & = & 2 z^{3} w^{4} + i z^{3} w^{3} + 3 z^{2} w^{4} + (7 + i) z^{2} w^{3} + z^{2} w^{2} - 2i w^{4} + 7 z^{3} + 2 z^{2} w \\ 
& & \hspace{+1.2cm} + w^{3} + 3 z^{2} - 7i, \\ 
Q_{(4, 5)} (z,w) & = & 2 z^{4} w^{4} + z^{3} w^{5} + 5 z^{3} w^{4} +3 z^{2} w^{5} + 4 z^{4} w^{2} + i z^{3} w^{3} + 13 z^{2} w^{4} + z w^{5} \\ 
& & \hspace{+1.2cm} + (4 + 6i) z^{3} w^{2} + 3i z^{2} w^{3} + 3 z w^{4} - w^{5} + z^{4} + z^{3} w \\ 
& & \hspace{+1.2cm} + (8 + 18i) z^{2} w^{2} + i z w^{3} + (2i - 3) w^{4} + 2 z^{3} + 3 z^{2} w + 6i z w^{2} \\ 
& & \hspace{+1.2cm} - i w^{3} + 5 z^{2} + z w - 2i w^{2} + z - w + (i - 1). 
\end{eqnarray*}

After due computations, one finds that 
\begin{eqnarray*}
\left( P_{(3,4)} * Q_{(4,5)} \right) (z, w) & = & (-7+i) z^{3} w^{5} + (-17+19i) z^{3} w^{4} + (9+i) z^{2} w^{5} - (1+7i) z^{3} w^{3} \\ 
        &&+ (51+11i) z^{2} w^{4} + (2-10i) z^{3} w^{2} + (-1+9i) z^{2} w^{3} + (1+7i) w^{5} \\
        &&+ (-7+i) z^{3} w + (42+70i) z^{2} w^{2} + (19+17i) w^{4} + (-5+9i) z^{3} \\
        &&+ (9+i) z^{2} w + (-7+i) w^{3} + (21+5i) z^{2} - (10+2i) w^{2} \\
        &&+ (1+7i) w + (9+5i) 
\end{eqnarray*}
is an irreducible restrictive polynomial correspondence, courtesy Theorem \ref{thm:tfae}, since the matrix corresponding to $(P_{(3,4)} * Q_{(4,5)})$ has rank $2$. 
\end{example} 

We now state and prove a proposition that provides a partial converse to Theorem \ref{thm:*mult}. 

\begin{proposition} 
\label{remtoprop}
Any irreducible restrictive polynomial correspondence $P_{(d_{w}, d_{z})}$ can be written as $P_{(d_{w}, d_{z})} (z, w)\ =\  \left( P^{(1)}_{(d_{w}, 1)} * P^{(2)}_{(1, d_{z})} \right) (z, w)$, where $P^{(1)}_{(d_{w}, 1)}$ and $P^{(2)}_{(1, d_{z})}$ are maps on the appropriate variables, masquerading as irreducible restrictive polynomial correspondences. 
\end{proposition}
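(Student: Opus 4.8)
The plan is to obtain the two factors by reading off the rank-$2$ factorisation of the matrix $M_{P_{(d_w, d_z)}}$ that is already provided by Theorem \ref{thm:tfae}. Since $P_{(d_w, d_z)}$ is an irreducible restrictive polynomial correspondence, that theorem lets me rewrite $P_{(d_w, d_z)}(z, w) = 0$ in the separated form $R(z) = S(w)$ of Equation \eqref{eqn:sep}, with
\[ R(z) = \frac{A_{d_w} z^{d_w} + \cdots + A_0}{B_{d_w} z^{d_w} + \cdots + B_0}, \qquad S(w) = \frac{C_{d_z} w^{d_z} + \cdots + C_0}{D_{d_z} w^{d_z} + \cdots + D_0}, \]
and with $M_{P_{(d_w, d_z)}} = \mathcal{R}\mathcal{S}$ factoring as the product of the thin matrices displayed in the proof of $2 \Longrightarrow 3$. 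In particular, comparing entries shows that the coefficient of $z^j w^k$ in $P_{(d_w, d_z)}$ is $\Lambda_{(j, k)} = A_j D_k - B_j C_k$, and this single bilinear identity is the engine of the whole argument.

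Next I would simply declare the two factors to be the correspondences attached to the two thin matrices $\mathcal{R}$ and $\mathcal{S}$, namely
\[ P^{(1)}_{(d_w, 1)} (z, w) = {\rm Nr.}(R)(z) + w\, {\rm Dr.}(R)(z) = \sum_{j\, =\, 0}^{d_w} \left( A_j + B_j w \right) z^j \]
and
\[ P^{(2)}_{(1, d_z)} (z, w) = {\rm Dr.}(S)(w) - z\, {\rm Nr.}(S)(w) = \sum_{k\, =\, 0}^{d_z} \left( D_k - C_k z \right) w^k. \]
Setting these to zero recovers exactly $R(z) = -w$ and $z = 1/S(w)$, each a separated equation in which one side is a degree-$1$ rational map; equivalently, the matrix of $P^{(1)}_{(d_w, 1)}$ is precisely $\mathcal{R}$ (of size $(d_w + 1) \times 2$) and that of $P^{(2)}_{(1, d_z)}$ is precisely $\mathcal{S}$ (of size $2 \times (d_z + 1)$), both of rank $2$ because $R$ and $S$ are genuine rational maps of degrees $d_w$ and $d_z$. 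Hence Theorem \ref{thm:tfae} certifies each factor as an irreducible restrictive polynomial correspondence, of bidegree $(d_w, 1)$ and $(1, d_z)$ respectively.

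To finish, I would transcribe this data into the bilinear form of Equation \eqref{defn:star}: the common middle degree is $d = 1$, the relevant polynomials are $p_j(w) = A_j + B_j w$ and $q_k(z) = D_k - C_k z$, so that $\alpha_0^{(j)} = A_j$, $\alpha_1^{(j)} = B_j$, $\beta_0^{(k)} = D_k$ and $\beta_1^{(k)} = -C_k$. Then for every $0 \le j \le d_w$ and $0 \le k \le d_z$,
\[ \langle p_j, q_k \rangle = \alpha_0^{(j)} \beta_0^{(k)} + \alpha_1^{(j)} \beta_1^{(k)} = A_j D_k - B_j C_k = \Lambda_{(j, k)}, \]
whence $\left( P^{(1)}_{(d_w, 1)} * P^{(2)}_{(1, d_z)} \right)(z, w) = \sum_{j, k} \langle p_j, q_k \rangle z^j w^k = \sum_{j, k} \Lambda_{(j, k)} z^j w^k = P_{(d_w, d_z)}(z, w)$, which is the claimed identity.

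The computation itself is a direct transcription, so the real obstacle is the bookkeeping around degeneracies. I must confirm that $\mathcal{R}$ and $\mathcal{S}$ genuinely have rank exactly $2$ (this is where the hypothesis that $R$ and $S$ are honest degree-$d_w$ and degree-$d_z$ maps is used, guaranteeing that their numerator and denominator coefficient vectors are linearly independent), and I must be candid that when $d_w = 1$ or $d_z = 1$ the corresponding factor carries bidegree $(1, 1)$ and thus falls just outside the paper's standing degree hypothesis in Section \ref{sec:rpc} even though it still obeys the separated form of Definition \ref{defn:rescorr}. This is exactly what the word \emph{masquerading} is meant to absorb. Since $P_{(d_w, d_z)}$ itself has at least one of $d_w, d_z$ strictly larger than $1$, at most one factor is degenerate, and the identity $P_{(d_w, d_z)} = P^{(1)}_{(d_w, 1)} * P^{(2)}_{(1, d_z)}$ holds verbatim in every case.
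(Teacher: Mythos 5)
Your proof is correct, and it follows the paper's overall strategy: invoke Theorem \ref{thm:tfae} to put $P_{(d_{w}, d_{z})}(z, w) = 0$ into the separated form \eqref{eqn:sep}, then exhibit explicit factors of bidegrees $(d_{w}, 1)$ and $(1, d_{z})$ and confirm the identity through the pairing \eqref{defn:star}. The decomposition itself, however, is genuinely different. The paper's first factor is cut out by $R(z) = \frac{C_{d_{z}} w + C_{0}}{D_{d_{z}} w + D_{0}}$ and its second by an equation expressing $z$ as a rational function of $w$ whose coefficients are the ratios $J_{(m, n)}/J_{(0, d_{z})}$ with $J_{(m, n)} = D_{m} C_{n} - D_{n} C_{m}$, after which the equality of the $*$-product with $P_{(d_{w}, d_{z})}$ is left as a verification. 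Your factors are instead cut out by $R(z) = -w$ and $z = 1/S(w)$, so that $p_{j}(w) = A_{j} + B_{j} w$ and $q_{k}(z) = D_{k} - C_{k} z$, and the whole verification collapses to the one-line identity $\langle p_{j}, q_{k} \rangle = A_{j} D_{k} - B_{j} C_{k} = \Lambda_{(j, k)}$. This buys you two things: the computation is actually closed out rather than deferred, and your factors never degenerate, whereas the paper's choice silently requires $J_{(0, d_{z})} = D_{0} C_{d_{z}} - D_{d_{z}} C_{0} \neq 0$ --- when that quantity vanishes (e.g.\ for $S(w) = \frac{w^{2} + w + 1}{w^{2} + 2w + 1}$) the M\"{o}bius expression $\frac{C_{d_{z}} w + C_{0}}{D_{d_{z}} w + D_{0}}$ is constant and the division defining the paper's second factor is illegitimate, while your choice works verbatim. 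Two points of bookkeeping: under the convention \eqref{eqn:matrix}, the matrix of your $P^{(1)}_{(d_{w}, 1)}$ is $\mathcal{R}$ with its two columns swapped (and that of $P^{(2)}_{(1, d_{z})}$ is $\mathcal{S}$ with its rows swapped), which is immaterial for the rank-$2$ claim; and the identification $\Lambda_{(j, k)} = A_{j} D_{k} - B_{j} C_{k}$ holds up to one overall nonzero scalar, the same normalisation the paper itself adopts in the proof of Theorem \ref{thm:tfae}, so nothing is lost. Your closing discussion of the bidegree-$(1, 1)$ degeneracy when $d_{w} = 1$ or $d_{z} = 1$ is precisely the content the paper compresses into the word \emph{masquerading}.
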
 

\begin{proof} 
Let $P_{(d_{w}, d_{z})} (z, w) = 0$ be rewritten as in Equation \eqref{eqn:sep}. Inspired by the ideas used in Theorem \ref{thm:tfae}, we consider the irreducible restrictive polynomial correspondences $P^{(1)}_{(d_{w}, 1)}$ and $P^{(2)}_{(1, d_{z})}$, respectively, obtained from equations
\[ \frac{A_{d_{w}} z^{d_{w}} + \cdots + A_{0}}{B_{d_{w}} z^{d_{w}} + \cdots + B_{0}}\ \ =\ \ \frac{C_{d_{z}} w + C_{0}}{D_{d_{z}} w + D_{0}}\ \ \ \ \text{and}\ \ \ \ z\ \ =\ \ \frac{- \frac{J_{(d_{z} - 1, d_{z})}}{J_{(0, d_{z})}} w^{d_{z} - 1} - \cdots - \frac{J_{(d_{z} - 1, 1)}}{J_{(0, d_{z})}} w - 1}{w^{d_{z}} + \frac{J_{(0, d_{z} - 1)}}{J_{(0, d_{z})}} w^{d_{z} - 1} + \cdots + \frac{J_{(0, 1)}}{J_{(0, d_{z})}} w}, \]
where $J_{m, n} = D_{m} C_{n} - D_{n} C_{m}$. One can then verify that $\left( P^{(1)}_{(d_{w}, 1)} * P^{(2)}_{(1, d_{z})} \right) = P_{(d_{w}, d_{z})}$. 
\end{proof} 

We conclude this section with a remark on two different symmetries that one may explore when dealing with irreducible restrictive polynomial correspondences; with the focus being when $d_{w} \ne d_{z}$. 

\begin{remark} 
Let $P_{(d_{w}, d_{z})}$ be any irreducible restrictive polynomial correspondence. Suppose that the transpose (and the conjugate transpose) of $M_{P_{(d_{w}, d_{z})}}$, given by $M^{{\rm T}}_{P_{(d_{w}, d_{z})}}$ (and $\overline{M}^{{\rm T}}_{P_{(d_{w}, d_{z})}}$) of order $(d_{z} + 1) \times (d_{w} + 1)$ pertains to the polynomial correspondence, denoted by $^{\dagger}\hspace{-2pt}P_{(d_{z}, d_{w})} (z, w)$ (and $^{\dagger}\hspace{-2pt}\overline{P}_{(d_{z}, d_{w})} (z, w)$, respectively) of bidegree $(d_{z}, d_{w})$. Then, 
\begin{enumerate} 
\item The matrix pertaining to the correspondence $P_{(d_{w}, d_{z})} * ^{\dagger}\hspace{-2pt}P_{(d_{z}, d_{w})}$, {\it i.e.}, $M_{(P_{(d_{w}, d_{z})} * ^{\dagger}\hspace{-2pt}P_{(d_{z}, d_{w})})}$ is symmetric. 
\item The matrix pertaining to the correspondence $P_{(d_{w}, d_{z})} * ^{\dagger}\hspace{-2pt}\overline{P}_{(d_{z}, d_{w})}$, {\it i.e.}, $M_{(P_{(d_{w}, d_{z})} * ^{\dagger}\hspace{-2pt}\overline{P}_{(d_{z}, d_{w})})}$ is Hermitian. 
\end{enumerate} 
\end{remark} 

\section{Proof of Theorem \ref{thm:conj}} 
\label{sec:thmpf}

We begin this section with the statement and the proof of a lemma, that will be useful in the proof of Theorem \ref{thm:conj}. 

\begin{lemma} 
\label{lem:rho} 
Let $P_{(d_{w}, d_{z})}$ be a restrictive polynomial correspondence of bidegree $(d_{w}, d_{z})$. Then, ${\rm Rank} \left(M_{P_{(d_{w}, d_{z})}}\right) = \rho$ \textit{iff} all the three following statements hold. 
\begin{enumerate} 
\item $P_{(d_{w}, d_{z})} (z, w)$ can be rewritten as $g_{1}(w) h_{1}(z) + \cdots + g_{\rho}(w) h_{\rho}(z)$, where 
\[ g_{r}(w) = \sum\limits_{n\, =\, 0}^{d_{z}} \alpha_{(n, r)} w^{n}\ \text{and}\ h_{r}(z) = \sum\limits_{m\, =\, 0}^{d_{w}} \beta_{(m, r)} z^{m}, \] 
with $\alpha_{(d_{z}, r)} \ne 0$ for some $1 \le r \le \rho$ and $\beta_{(d_{w}, r)} \ne 0$ for some $1 \le r \le \rho$. 
\item The collection of vectors $\left\{ \left( \beta_{(m, r)} \right)_{0\, \le\, m\, \le\, d_{w}} \right\}_{1\, \le\, r\, \le\, \rho}$ is linearly independent. 
\item The collection of vectors $\left\{ \left( \alpha_{(n, r)} \right)_{0\, \le\, n\, \le\, d_{z}} \right\}_{1\, \le\, r\, \le\, \rho}$ is linearly independent. 
\end{enumerate} 
\end{lemma}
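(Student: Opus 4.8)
The plan is to read Lemma \ref{lem:rho} as the polynomial translation of the full rank factorisation of the coefficient matrix $M_{P_{(d_w, d_z)}}$, so that the three enumerated conditions become the defining data of such a factorisation.

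The first step is the dictionary between a bilinear decomposition of $P_{(d_w, d_z)}$ and a factorisation of its matrix. Writing $P_{(d_w, d_z)}(z, w) = \sum_{r = 1}^{\rho} g_r(w) h_r(z)$ with $g_r(w) = \sum_n \alpha_{(n, r)} w^n$ and $h_r(z) = \sum_m \beta_{(m, r)} z^m$, and comparing the coefficient of $z^m w^n$ on each side, yields $\Lambda_{(m, n)} = \sum_{r = 1}^{\rho} \beta_{(m, r)} \alpha_{(n, r)}$. Assembling the vectors $\left( \beta_{(m, r)} \right)_m$ as the columns of a matrix $\mathcal{H}$ of order $(d_w + 1) \times \rho$ and the vectors $\left( \alpha_{(n, r)} \right)_n$ as the columns of a matrix $\mathcal{G}$ of order $(d_z + 1) \times \rho$, this identity is precisely $M_{P_{(d_w, d_z)}} = \mathcal{H} \mathcal{G}^{\rm T}$, modulo the reversal of the row and column orderings in Equation \eqref{eqn:matrix}, which leaves the rank unchanged. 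Under this dictionary, conditions $(2)$ and $(3)$ assert exactly that $\mathcal{H}$ and $\mathcal{G}$ have full column rank $\rho$.

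With the dictionary in place, the backward implication is immediate: given the decomposition in $(1)$ together with $(2)$ and $(3)$, the matrix $\mathcal{H}$ is injective as a map $\mathbb{C}^{\rho} \to \mathbb{C}^{d_w + 1}$ while $\mathcal{G}^{\rm T}$ is surjective onto $\mathbb{C}^{\rho}$, so that ${\rm Image} \left( \mathcal{H} \mathcal{G}^{\rm T} \right) = \mathcal{H} \left( \mathbb{C}^{\rho} \right)$ has dimension $\rho$, whence ${\rm Rank} \left( M_{P_{(d_w, d_z)}} \right) = \rho$. For the forward implication, I would invoke the full rank factorisation theorem to write $M_{P_{(d_w, d_z)}} = \mathcal{H} \mathcal{G}^{\rm T}$ with both factors of rank $\rho$, and then read the columns of $\mathcal{H}$ and $\mathcal{G}$ back as the coefficient vectors of polynomials $h_r$ and $g_r$; this furnishes the decomposition of $(1)$, and the rank-$\rho$ property of the two factors is nothing but $(2)$ and $(3)$.

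The step that needs genuine care, and that I expect to be the main obstacle, is verifying the leading-coefficient assertions $\alpha_{(d_z, r)} \ne 0$ for some $r$ and $\beta_{(d_w, r)} \ne 0$ for some $r$ in the forward direction, since the full rank factorisation theorem gives no control over individual entries of $\mathcal{H}$ and $\mathcal{G}$. The way around this is to use that $P_{(d_w, d_z)}$ genuinely has bidegree $(d_w, d_z)$: its degree $d_w$ in $z$ makes the row of $M_{P_{(d_w, d_z)}}$ indexed by $z^{d_w}$ nonzero, and its degree $d_z$ in $w$ makes the column indexed by $w^{d_z}$ nonzero. Because $\mathcal{G}^{\rm T}$ has full row rank, left multiplication by it is injective, so a nonzero $z^{d_w}$-row of $M_{P_{(d_w, d_z)}} = \mathcal{H} \mathcal{G}^{\rm T}$ forces the corresponding row $\left( \beta_{(d_w, r)} \right)_r$ of $\mathcal{H}$ to be nonzero; the claim about $\alpha_{(d_z, r)}$ follows symmetrically from the full column rank of $\mathcal{H}$. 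These leading-coefficient conditions are exactly what make the characterisation compatible with the prescribed bidegree, and once they are secured the proof is complete.
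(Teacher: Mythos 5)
Your proposal is correct and follows essentially the same route as the paper: both directions rest on the dictionary between bilinear decompositions $\sum_{r} g_{r}(w) h_{r}(z)$ and rank factorisations $M_{P_{(d_{w}, d_{z})}} = \mathcal{H} \mathcal{G}^{\rm T}$, with conditions (2) and (3) encoding full column rank of the two factors, and the full rank factorisation theorem supplying the nontrivial implication. The only difference in execution is that in the forward direction the paper builds the factorisation concretely from a column basis of $M_{P_{(d_{w}, d_{z})}}$ itself (so the linear independence and leading-coefficient conditions fall out of an identity submatrix of the coefficient array), whereas you invoke the abstract factorisation theorem and then secure $\alpha_{(d_{z}, r)} \ne 0$ and $\beta_{(d_{w}, r)} \ne 0$ by the separate, and correct, observation that the bidegree forces the top row and first column of $M_{P_{(d_{w}, d_{z})}}$ to be nonzero.
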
 

\begin{proof} 
At the outset, we urge the readers to observe that the case $\rho = 2$ is the same as the equivalence between the statements $2$ and $3$ in Theorem \ref{thm:tfae}. 

We will initially assume that the matrix $M_{P_{(d_{w}, d_{z})}}$ is of rank $\rho$ and rewrite the given correspondence $P_{(d_{w}, d_{z})} \left( z, w \right)$ as $\sum\limits_{n\, =\, 0}^{d_{z}} q_{n} (z) w^{n}$, where $q_{d_{z}} \nequiv 0$, each polynomial $q_{n}$ is of degree atmost $d_{w}$ and there exists at least one polynomial $q_{n}$ whose degree is equal to $d_{w}$. Moreover, the coefficients of the polynomial $q_{n}$ forms the $n$-th column of the matrix $M_{P_{(d_{w}, d_{z})}}$, for $0 \le n \le d_{z}$. Then, the matrix has $\rho$ many column vectors, say $\left\{ {\rm Col}_{n_{r}} \right\}_{1\, \le\, r\, \le\, \rho}$ that form a basis for the column space, where $0 \le n_{r} \le d_{z}$. Thus, we have $q_{n} (z) = \sum\limits_{r\, =\, 1}^{\rho} \alpha_{(n, r)} q_{n_{r}} (z)$, thereby the correspondence can be written as 
\[ P_{(d_{w}, d_{z})} \left( z, w \right)\ =\ \sum_{n\, =\, 0}^{d_{z}} \sum_{r\, =\, 1}^{\rho} \alpha_{(n, r)} q_{n_{r}} (z) w^{n}\ =\ \sum_{r\, =\, 1}^{\rho} q_{n_{r}} (z) \sum_{n\, =\, 0}^{d_{z}} \alpha_{(n, r)} w^{n}. \]
We now define $g_{r} (w) = \sum\limits_{n\, =\, 0}^{d_{z}} \alpha_{(n, r)} w^{n}$ and $h_{r} (z) = q_{n_{r}} (z) = \sum\limits_{m\, =\, 0}^{d_{w}} \beta_{(m, r)} z^{m}$, say, so that the first statement is obtained. By our choice of the column vectors, $\left\{ {\rm Col}_{n_{r}} \right\}_{1\, \le\, r\, \le\, \rho}$, we have the collection $\left\{ \left( \beta_{(m, r)} \right)_{0\, \le\, m\, \le\, d_{w}} \right\}_{1\, \le\, r\, \le\, \rho}$ to be linearly independent. Further, for the matrix, say $N$ of size $(d_{z} + 1) \times \rho$ consisting of entries $\alpha_{(n, r)}$, observe that for $1 \le s \le \rho,\ \alpha_{(n_{s}, r)} = 1$ whenever $s = r$ and $0$ otherwise, since $q_{n_{s}} (z) = \sum\limits_{r\, =\, 1}^{\rho} \alpha_{(n_{r}, r)} q_{n_{r}} (z)$. Thus there exists a $\rho \times \rho$ submatrix of $N$ with a non-zero determinant, implying that the collection $\left\{ \left( \alpha_{(n, r)} \right)_{0\, \le\, n\, \le\, d_{z}} \right\}_{1\, \le\, r\, \le\, \rho}$ is linearly independent.

Conversely, suppose all three statements in the Lemma hold, then the matrix associated to the correspondence $P_{(d_{w}, d_{z})}$ is given by 
\begin{eqnarray*} 
M_{P_{(d_{w}, d_{z})}} & = & \begin{pmatrix} \displaystyle\sum_{r\, =\, 1}^{\rho} \alpha_{(d_{z}, r))} \beta_{(d_{w}, r))} & \displaystyle\sum_{r\, =\, 1}^{\rho} \alpha_{(d_{z} - 1, r))} \beta_{(d_{w}, r))} & \cdots & \displaystyle\sum_{r\, =\, 1}^{\rho} \alpha_{(0, r))} \beta_{(d_{w}, r))} \\ 
\vspace{+5pt}  
\displaystyle\sum_{r\, =\, 1}^{\rho} \alpha_{(d_{z}, r))} \beta_{(d_{w} - 1, r))} & \displaystyle\sum_{r\, =\, 1}^{\rho} \alpha_{(d_{z} - 1, r))} \beta_{(d_{w} - 1, r))} & \cdots & \displaystyle\sum_{r\, =\, 1}^{\rho} \alpha_{(0, r))} \beta_{(d_{w} - 1, r))} \\ 
\vspace{+5pt}  
\vdots & \vdots & \ddots & \vdots \\ 
\vspace{+5pt}  
\displaystyle\sum_{r\, =\, 1}^{\rho} \alpha_{(d_{z}, r))} \beta_{(0, r))} & \displaystyle\sum_{r\, =\, 1}^{\rho} \alpha_{(d_{z} - 1, r))} \beta_{(0, r))} & \cdots & \displaystyle\sum_{r\, =\, 1}^{\rho} \alpha_{(0, r))} \beta_{(0, r))} \end{pmatrix} 
\end{eqnarray*} 
\begin{eqnarray*} 
& = & \begin{pmatrix} \beta_{(d_{w}, 1)} & \beta_{(d_{w}, 2)} & \cdots & \beta_{(d_{w}, \rho)} \\ 
\vspace{+5pt}  
\beta_{(d_{w} - 1, 1)} & \beta_{(d_{w} - 1, 2)} & \cdots & \beta_{(d_{w} - 1, \rho)} \\ 
\vspace{+5pt}  
\vdots & \vdots & \ddots & \vdots \\ 
\vspace{+5pt}  
\beta_{(0, 1)} & \beta_{(0, 2)} & \cdots & \beta_{(0, \rho)} 
\end{pmatrix} \begin{pmatrix} \alpha_{(d_{z}, 1)} & \alpha_{(d_{z} - 1, 1)} & \cdots & \alpha_{(0, 1)} \\ 
\vspace{+5pt}  
\alpha_{(d_{z}, 2)} & \alpha_{(d_{z} - 1, 2)} & \cdots & \alpha_{(0, 2)} \\ 
\vspace{+5pt}  
\vdots & \vdots & \ddots & \vdots \\ 
\vspace{+5pt}  
\alpha_{(d_{z}, \rho)} & \alpha_{(d_{z} - 1, \rho)} & \cdots & \alpha_{(0, \rho)} 
\end{pmatrix}. 
\end{eqnarray*} 
Since each of the matrices in the above product is of rank $\rho$, the full rank factorisation theorem, as one may find in \cite{hj:2013, po:1999}, then implies that ${\rm Rank} \left(M_{P_{(d_{w}, d_{z})}}\right) = \rho$. 
\end{proof} 

We now prove Theorem \ref{thm:conj}. 

\begin{proof} 
In order to prove Theorem \ref{thm:conj}, we consider an irreducible restrictive polynomial correspondence $Q_{(d_{w}', d_{z}')}$ and prove that for any $d > 1$, the matrix pertaining to the reducible restrictive polynomial correspondence $P_{(d d_{w}', d d_{z}')} = \left[ Q_{(d_{w}', d_{z}')} \right]^{d}$ given by $M_{P_{(d d_{w}', d d_{z}')}}$ has rank $d + 1$. In fact, owing to Lemma \ref{lem:rho}, it is sufficient to prove that 
\[ P_{(d d_{w}', d d_{z}')} (z, w)\ =\ g_{0}(w) h_{0}(z) + \cdots + g_{d}(w) h_{d}(z), \]
where the coefficients of $g_{r}$ and $h_{r}$ satisfy the three conditions stated therein. 

Since $Q_{(d_{w}', d_{z}')}$ is an irreducible restrictive polynomial correspondence, using Theorem \ref{thm:tfae} and Lemma \ref{lem:rho}, we have that $Q_{(d_{w}', d_{z}')} (z, w) = p_{1}(w) q_{1}(z) + p_{2}(w) q_{2}(z)$, where 
\[ p_{j}(w) = \sum_{n\, =\, 0}^{d_{z}'} \alpha_{(n, j)} w^{n}\ \ \text{and}\ \ q_{j}(z) = \sum_{m\, =\, 0}^{d_{w}'} \beta_{(m, j)} z^{m}\ \ \text{for}\ j = 1, 2; \] 
such that 
\begin{enumerate} 
\item[(1)] either $p_{1}$ or $p_{2}$ has degree $d_{z}'$; 
\item[(2)] either $q_{1}$ or $q_{2}$ has degree $d_{w}'$; 
\item[(3)] there exists no $c_{1} \in \mathbb{C}$ such that $p_{1} = c_{1} p_{2}$ and 
\item[(4)] there exists no $c_{2} \in \mathbb{C}$ such that $q_{1} = c_{2} q_{2}$. 
\end{enumerate} 

Then, 
\begin{eqnarray*} 
P_{(d d_{w}', d d_{z}')} (z, w) & = & \left[ p_{1}(w) q_{1}(z) + p_{2}(w) q_{2}(z) \right]^{d} \\ 
& = & \sum_{r\, =\, 0}^{d} \binom{d}{r} \left( p_{1}(w) \right)^{d - r} \left( p_{2}(w) \right)^{r} \left( q_{1}(z) \right)^{d - r} \left( q_{2}(z) \right)^{r}. \end{eqnarray*} 
Define $g_{r}(w) = \displaystyle\binom{d}{r} \left( p_{1}(w) \right)^{d - r} \left( p_{2}(w) \right)^{r}$ and $h_{r}(z) = \left( q_{1}(z) \right)^{d - r} \left( q_{2}(z) \right)^{r}$. Owing to conditions (1) and (2) mentioned above, we have that at least one of the polynomials $g_{r}$'s has degree $dd_{z}'$ and that at least one of the polynomials $h_{r}$'s has degree $dd_{w}'$. 

Thus, what remains to be proven is that the collection of $(d + 1)$ vectors, each being a $(dd_{z}' + 1)$-tuple determined by the coefficients of the polynomial $g_{r}$ is linearly independent for $0 \le r \le d$ and so is the corresponding collection of $(d + 1)$ vectors, each being a $(dd_{w}' + 1)$-tuple determined by $h_{r}$. To do the same, we consider the matrix, say $\mathcal{G}$ of size $(dd_{z}' + 1) \times (d + 1)$, where each column is filled with the coefficients of $g_{r}$ in order, for $0 \le r \le d$ and prove that there exists a submatrix of $\mathcal{G}$ of order $(d + 1)$ whose determinant is non-zero. Observe that the entries in row $t$ of the matrix $\mathcal{G}$ correspond to the coefficients of $w^{dd_{z}' - t}$ in each of the polynomials $g_{r}$'s. Thus, the entry corresponding to row $t$ and column $r$ of the matrix $\mathcal{G}$ is given by 
\[ \mathcal{G}_{(t, r)}\ \ =\ \ \binom{d}{r} \sum_{k_{1} + k_{2}\, =\, dd_{z}' - t} \left[ \text{Coefficients of}\ w^{k_{1}}\ \text{in}\ p_{1}^{d - r} \right] \left[ \text{Coefficients of}\ w^{k_{2}}\ \text{in}\ p_{2}^{r} \right]. \]  

Owing to condition $(3)$, as mentioned above, there exists $0 \le n_{1}, n_{2} \le d_{z}'$ such that $\alpha_{(n_{1}, 1)} \alpha_{(n_{2}, 2)} - \alpha_{(n_{2}, 1)} \alpha_{(n_{1}, 2)} \ne 0$. Without loss of generality, we shall assume that $n_{1} = d_{z}'$ and $n_{2} = d_{z}' - 1$. Then, considering the square submatrix of size $(d + 1)$ formed by the first $(d + 1)$ rows of $\mathcal{G}$, we note that its determinant is equal to 
\[ \prod_{r\, =\, 0}^{d} \binom{d}{r} \left( \alpha_{(d_{z}', 1)} \alpha_{(d_{z}' - 1, 2)} - \alpha_{(d_{z}' - 1, 1)} \alpha_{(d_{z}', 2)} \right)^{\frac{d (d + 1)}{2}}\ \ \ne\ \ 0. \] 
One way to compute this determinant is to factorise the square submatrix as $L U P$ where $L$ is a lower triangular matrix with diagonal entries $1$, $P$ is a permutation matrix and $U$ is an upper triangular matrix with diagonal entries $\displaystyle{\binom{d}{r} \alpha_{(d_{z}', 1)}^{d - 2r} \left( \alpha_{(d_{z}', 1)} \alpha_{(d_{z}' - 1, 2)} - \alpha_{(d_{z}' - 1, 1)} \alpha_{(d_{z}', 2)} \right)^{r}}$ for $0 \le r \le d$. For any general $0 \le n_{1}, n_{2} \le d_{z}'$, one must choose an appropriate square submatrix of $\mathcal{G}$ of size $(d + 1)$ to conclude that its determinant is non-zero. This establishes that the coefficients of $g_{r}$ for $0 \le r \le d$ are linearly independent. 

The proof for the coefficients of $h_{r}$ for $0 \le r \le d$ being linearly independent follows \emph{mutatis mutandis}. 
\end{proof} 

\begin{example} 
Recall the polynomial correspondence $P_{(6, 4)}$, as stated in Example \eqref{ex:onesq}, that was obtained by squaring the correspondence $P_{(3, 2)}$, as given in Example \eqref{ex:one}. The matrix $M_{P_{(6, 4)}}$ pertaining to this correspondence has rank $3$. Further, defining 
\begin{eqnarray*} 
h_{1}(z) & = & - z^{6} - 2i z^{5} - 21 z^{4} - 24i z^{3} - 119 z^{2} - 22i z + 1, \\ 
h_{2}(z) & = & 10i z^{6} - (10+4i) z^{5} + (4+220i) z^{4} - (120+48i) z^{3} + (8+1210i) z^{2} \\ 
& & - (110+44i) z + 4, \\ 
h_{3}(z) & = & (25+2i) z^{6} - (22+12i) z^{5} + (566+44i) z^{4} - (264+144i) z^{3} + (3057+242i) z^{2} \\ 
& & - (242+132i) z + 16, \\ 
g_{1}(w) & = & w^{4} + \dfrac{-15792+11872i}{841} w + \dfrac{509432+776720 i}{24389}, \\ 
g_{2}(w) & = & w^{3} + \dfrac{5050+3528i}{841} w + \dfrac{149563-83748i}{24389}, \\ 
g_{3}(w) & = & w^{2} + \dfrac{34-56i}{29} w + \dfrac{-495-952i}{841}, 
\end{eqnarray*} 
one observes that $P_{(6,4)}$ can be rewritten as $g_{1}(w) h_{1}(z) + g_{2}(w) h_{2}(z) + g_{3}(w) h_{3}(z)$, as asserted by Lemma \ref{lem:rho}. 
\end{example} 

Finally, we conclude this manuscript with a more general example, that motivated the proof of Theorem \ref{thm:conj}. 

\begin{example} 
Consider the irreducible restrictive polynomial correspondence 
\[ P_{(d_{w}, d_{z})} (z,w)\ \ =\ \ z^{d_{w}} w^{d_{z}} + 1, \] 
where $d_{w}, d_{z} \ge 2$. Then, we see that for any $n \in \mathbb{Z}_{+}$, 
\[ P_{(d_{w}, d_{z})}^{n} (z,w) = z^{n d_{w}} w^{n d_{z}} + \binom{n}{1} z^{(n-1) d_{w}} w^{(n-1) d_{z}} + \binom{n}{2} z^{(n-2) d_{w}} w^{(n-2) d_{z}} + \cdots + 1. \] 
Then, setting $g_{r}(w) = w^{d_{z} (n - r)}$ and $h_{r}(z) = \displaystyle \binom{n}{r} z^{d_{w} (n - r)}$ for $0 \le r \le n$, we see that $P_{(d_{w}, d_{z})}^{n} (z,w)$ can be rewritten as $\displaystyle\sum_{r\, =\, 0}^{n} g_{r} (w) h_{r} (z)$. Also the matrix of size $(n d_{w} + 1) \times (n d_{z} + 1)$, namely $M_{P_{(d_{w}, d_{z})}^{n}}$ whose entries are given by,
     \[
    a_{j,k} = \begin{cases} 
        \displaystyle \binom{n}{n - r} & \text{for } j = d_{w} (n - r),\ k = d_{z} (n - r)\\
        0 & \text{otherwise } 
        \end{cases} 
  \] 
  with $0 \le r \le n$, has rank $(n + 1)$. 
\end{example}
\bigskip

\bigskip

\end{document}